\theoremstyle{plain}
\newtheorem{theoreme}{Theorem}[section]
\newtheorem{prop}[theoreme]{Proposition}
\newtheorem{lem}[theoreme]{Lemma}
\theoremstyle{remark}
\newtheorem{definition}[theoreme]{Definition}
\newtheorem{ex}[theoreme]{Example}
\newtheorem{remark}[theoreme]{Remark}
\newtheorem{c-ex}[theoreme]{Counter-example}
\date{}
\DeclareMathOperator{\supp}{supp}
\DeclareMathOperator{\Proj}{Proj}
\DeclareMathOperator{\Fil}{Fil}
\DeclareMathOperator{\ty}{type}
\DeclareMathOperator{\Id}{Id}
\DeclareMathOperator{\GL}{GL}
\author[O. Tout]{Omar Tout}
\address{Department of Mathematics, College of Science, Sultan Qaboos University, P. O Box 36, Al Khod 123, Sultanate of Oman}
\email{o.tout@squ.edu.om}
\title[]{The algebra of conjugacy classes of the wreath product\\ of a finite group with the symmetric group}
\keywords{Wreath product, partial permutations, structure coefficients, character theory, shifted symmetric functions}
\subjclass[2020]{Primary 05E05, 05E10, 20C30; Secondary 20E22.}
\begin{document}
\maketitle

\begin{abstract} For a finite group $G,$ we define the concept of $G$-partial permutation and use it to show that the structure coefficients of the center of the wreath product $G\wr \mathcal{S}_n$ algebra are polynomials in $n$ with non-negative integer coefficients. Our main tool is a combinatorial algebra which projects onto the center of the group $G\wr \mathcal{S}_n$ algebra for every $n.$ This generalizes the Ivanov and Kerov method to prove the polynomiality property for the structure coefficients of the center of the symmetric group algebra. 
\end{abstract}

\section{Introduction}
Throughout this paper $G$ will be a finite group, $1_G$ its identity element, $G_\star$ its set of conjugacy classes and $G^\star$ its set of irreducible complex characters. If $n$ is a positive integer, let $\mathcal{S}_n$ denote the symmetric group on the set $[n] := \lbrace 1,2,\ldots ,n\rbrace.$ A partition is a finite list of non-increasing positive integers called parts. The size of a partition is the sum of all of its parts. A partition of size $n$ is usually called a partition of $n.$ We denote by $\mathcal{P}_n^{G_\star}$ the set of families of partitions $\Lambda=(\Lambda(c))_{c\in G_\star},$ indexed by $G_\star,$ such that the sizes of the partitions $\Lambda(c)$ sum up to $n.$ The type of an element of the wreath product $G\wr \mathcal{S}_n$ is a family of partitions $\Lambda\in \mathcal{P}_n^{G_\star},$ see \cite{McDo}. Two elements of $G\wr \mathcal{S}_n$ are conjugate if and only if they have the same type. The center of the group $G\wr \mathcal{S}_n$ algebra, which will be denoted $Z(\mathbb{C}[G\wr \mathcal{S}_n]),$ is the algebra over $\mathbb{C}$ generated by the conjugacy classes of $G\wr \mathcal{S}_n.$ If $\Lambda\in \mathcal{P}_n^{G_\star},$ we define $\mathbf{C}_\Lambda$ to be the formal sum of all the elements in $G\wr \mathcal{S}_n$ with type $\Lambda.$ The family $(\mathbf{C}_\Lambda)_\Lambda$ indexed by $\mathcal{P}_n^{G_\star}$ is a linear basis for $Z(\mathbb{C}[G\wr \mathcal{S}_n]).$ The structure coefficients $c_{\Lambda\Delta}^{\Gamma}$ are the non-negative integers defined by the following product in $Z(\mathbb{C}[G\wr \mathcal{S}_n])$
\begin{equation*}
\mathbf{C}_\Lambda \mathbf{C}_\Delta=\sum_{\Gamma \in \mathcal{P}_n^{G_\star}} c_{\Lambda\Delta}^{\Gamma} \mathbf{C}_\Gamma.
\end{equation*}
In the case where $G$ is the trivial group, the group $G\wr \mathcal{S}_n$ is isomorphic to the symmetric group $\mathcal{S}_n.$ The conjugacy classes of $\mathcal{S}_n$ are indexed by partitions of $n.$ It is a difficult problem to find explicit formulas even for particular structure coefficients of $Z(\mathbb{C}[\mathcal{S}_n]),$ see \cite{katriel1987explicit}, \cite{GoupilSchaefferStructureCoef}, \cite{Toutjacodesmath}. In \cite{FaharatHigman1959}, Farahat and Higman showed that the structure coefficients of $Z(\mathbb{C}[\mathcal{S}_n])$ are polynomials in $n.$ By introducing partial permutations in \cite{Ivanov1999}, Ivanov and Kerov gave a combinatorial proof to this result. Recently, we used in \cite{Tout2018} our general framework developed in \cite{Tout2017} to show a polynomiality property for the structure coefficients of $Z(\mathbb{C}[\mathcal{S}_k\wr \mathcal{S}_n]).$ \\

Beside from being combinatorial, the Ivanov-Kerov approach, developed in \cite{Ivanov1999}, uses a universal algebra which turns out to be isomorphic to the algebra of shifted symmetric functions. In the past few years, it was used to show a polynomiality property for the structure coefficients of some interesting algebras. For example, we define the notion of partial bijection in \cite{toutejc} to show that the structure coefficients of the Hecke algebra of the pair $(\mathcal{S}_{2n},\mathcal{B}_n),$ where $\mathcal{B}_n$ is the hyperoctahedral subgroup of $\mathcal{S}_{2n},$ are polynomials in $n.$ In \cite{meliot2013partial}, the concept of partial isomorphism appeared to give a polynomiality property for the structure coefficients of the center of the group $\GL(n,\mathbb{F}_q)$ algebra, where $q$ is a prime number and $\GL(n,\mathbb{F}_q)$ is the group of invertible $n\times n$ matrices with coefficients in the finite field $\mathbb{F}_q.$ We used the notion of $k$-partial permutation in \cite{tout2020k} to give a more combinatorial proof to our result in \cite{Tout2018}.\\

In \cite{wang2004farahat}, Wang proved that the structure coefficients $c_{\Lambda\Delta}^{\Gamma}$ of $Z(\mathbb{C}[G\wr \mathcal{S}_n])$ are polynomials in $n.$ He used the Farahat-Higman approach developed in \cite{FaharatHigman1959} for the center of the symmetric group algebra. The goal of this paper is to generalize the Ivanov-Kerov approach in order to obtain Wang's result by a more algebraic combinatorial way. For this reason, we will define the concept of $G$-partial permutation and use it to build a universal combinatorial algebra which projects onto the center of the group $G\wr \mathcal{S}_n$ algebra for each $n.$ We will prove that this universal algebra is isomorphic to the algebra of shifted symmetric functions on $|G^\star|$ alphabets. Recently, it came to our attention that Wang mentioned our generalization in \cite[Section $5.3$]{wang2004vertex}. However, in addition to providing all the details, we think that some presented results like Theorem \ref{th:7.1}, are new and make a valuable contribution to the literature. \\

The paper is organized as follows. In Section \ref{sec_2}, we present the necessary definitions for partitions and we review some basic results concerning the conjugacy classes and the center of the group $G\wr \mathcal{S}_n$ algebra. Then, in Section \ref{sec_3}, we introduce the notion of $G$-partial permutation. An action of the group $G\wr \mathcal{S}_n$ on the set of $G$-partial permutations of $n$ is given in Section \ref{sec_4}. The universal combinatorial algebra $\mathcal{A}_\infty^G,$ which projects on the center of the group $G\wr \mathcal{S}_n$ algebra for each $n,$ will be built in Section \ref{sec_5}. Next in Section \ref{sec_6}, we prove in Theorem \ref{main_theorem} that the structure coefficients of the center of the group $G\wr \mathcal{S}_n$ algebra are polynomials in $n.$ In the last section, we present an isomorphism between $\mathcal{A}_\infty^G$ and the algebra of shifted symmetric functions on $|G^\star|$ alphabets.

\section{Algebra of the conjugacy classes of $G \wr \mathcal{S}_n$}\label{sec_2}
In this section we will review all necessary definitions and results concerning the conjugacy classes of $G \wr \mathcal{S}_n.$ For more details, the reader is invited to check \cite[Appendix B]{McDo}.

\subsection{Partitions} A \textit{partition} $\lambda$ is a weakly decreasing list of positive integers $(\lambda_1,\ldots,\lambda_l).$ The $\lambda_i$ are called the \textit{parts} of $\lambda.$ The \textit{size} of $\lambda,$ denoted by $|\lambda|,$ is the sum of all of its parts. We say that $\lambda$ is a partition of $n$ if $|\lambda|=n.$ The set of all partitions of $n$ will be denoted $\mathcal{P}_n.$ In this paper, we will mainly use the exponential notation $\lambda=(1^{m_1(\lambda)},2^{m_2(\lambda)},3^{m_3(\lambda)},\ldots),$ where $m_i(\lambda)$ is the number of parts equal to $i$ in the partition $\lambda.$ We will dismiss $i^{m_i(\lambda)}$ from $\lambda$ when $m_i(\lambda)=0,$ for example, we will write $\lambda=(1,2,4^2)$ instead of $\lambda=(1,2,3^0,4^2,5^0,\ldots).$ If $\lambda$ and $\delta$ are two partitions, we define the \textit{union} $\lambda \cup \delta$ and subtraction $\lambda \setminus \delta$ (if exists) as the following partitions:
$$\lambda \cup \delta=(1^{m_1(\lambda)+m_1(\delta)},2^{m_2(\lambda)+m_2(\delta)},3^{m_3(\lambda)+m_3(\delta)},\ldots).$$
$$\lambda \setminus \delta=(1^{m_1(\lambda)-m_1(\delta)},2^{m_2(\lambda)-m_2(\delta)},3^{m_3(\lambda)-m_3(\delta)},\ldots) \text{ if $m_i(\lambda)\geq m_i(\delta)$ for any $i.$ }$$

The \textit{cycle-type} of a permutation of $\mathcal{S}_n$ is the partition of $n$ obtained from the lengthes of the cycles that appear in its decomposition into product of disjoint cycles. For example, the permutation $(1,4)(2,6,3)(5)(7,8)$ of $\mathcal{S}_{8}$ has cycle-type $(1,2^2,3).$ It is well known that two permutations of $\mathcal{S}_n$ belong to the same conjugacy class if and only if they have the same cycle-type. Thus the conjugacy classes of $\mathcal{S}_n$ can be indexed by partitions of $n.$ The conjugacy class of $\mathcal{S}_n$ associated to the partition $\lambda=(1^{m_1(\lambda)},2^{m_2(\lambda)},3^{m_3(\lambda)},\ldots,n^{m_n(\lambda)})\in \mathcal{P}_n$ will be denoted $C_\lambda$ and its cardinal is given by the following formula:
$$|C_\lambda|=\frac{n!}{z_\lambda},$$
where
\begin{equation*}
z_\lambda:=\prod_{i\geq 1} i^{m_i(\lambda)}m_i(\lambda)!.
\end{equation*}

A partition is called \textit{proper} if it does not have any part equal to 1. The proper partition associated to a partition $\lambda$ is the partition $\bar{\lambda}:=(2^{m_2(\lambda)},3^{m_3(\lambda)},\ldots).$ The set of all proper partitions with size less than or equal to $n$ will be denoted $\mathcal{PP}_{\leq n}.$ If $\lambda$ is a partition of $r<n,$ we can extend $\lambda$ to a partition of $n$ by adding $n-r$ parts equal to one, the new partition of $n$ will be denoted $\underline{\lambda}_n.$

If $X$ is a finite set and $\Lambda=(\Lambda(x))_{x\in X}$ is a family of partitions indexed by $X,$ we define the size of $\Lambda,$ denoted by $|\Lambda|,$ to be the sum of the sizes of $\Lambda(x)$
\begin{equation*}
|\Lambda|=\sum_{x\in X}|\Lambda(x)|.
\end{equation*}
The set of families of partitions of size $n$ indexed by $X$ will be denoted $\mathcal{P}_n^X$ and $\mathcal{P}_{\leq n}^X$ will denote the set of families of partitions indexed by $X$ with size less than or equal to $n.$ In this paper we will mainly encounter families of partitions indexed by $G_\star$ and $G^\star.$  An element $\Lambda\in \mathcal{P}_n^{G_\star}$ is called proper if the partition $\Lambda (\lbrace 1_G\rbrace)$ is proper. We will use $\mathcal{PP}_n^{G_\star}$ (resp. $\mathcal{PP}_{\leq n}^{G_\star}$) to denote the set of proper families of partitions of size $n$ (resp. less than or equal to $n$) indexed by $G_\star.$ If $\Lambda \in\mathcal{P}_{\leq n}^{G_\star},$ we define $\underline{\Lambda}_n$ to be the element of $\mathcal{P}_n^{G_\star}$ with $\underline{\Lambda}_n (c) =\Lambda (c)$ if $c\neq \lbrace 1_G\rbrace$ and
$$\underline{\Lambda}_n (\lbrace 1_G\rbrace) =\Lambda (\lbrace 1_G\rbrace)\cup (1^{n-|\Lambda|})=\underline{\Lambda (\lbrace 1_G\rbrace)}_n.$$

\subsection{Conjugacy classes of $G \wr \mathcal{S}_n$}\label{subsec:conj_class_wreath} The \textit{wreath product} $G \wr \mathcal{S}_n$ is the group with underlying set $G^n\times \mathcal{S}_n$ and product defined as follows:
$$((\sigma_1,\ldots ,\sigma_n); p)\cdot ((\epsilon_1,\ldots ,\epsilon_n); q)=((\sigma_{q^{-1}(1)}\epsilon_1,\ldots ,\sigma_{q^{-1}(1)}\epsilon_n);pq),$$
for any $((\sigma_1,\ldots ,\sigma_n); p),((\epsilon_1,\ldots ,\epsilon_n); q)\in G^n\times \mathcal{S}_n.$ We apply $p$ before $q$ when we write the product $pq.$ The identity in this group is $(1;1):=((1_G,1_G,\ldots ,1_G); \Id_n).$ The inverse of an element $((\sigma_1,\sigma_2,\ldots ,\sigma_n); p)\in G \wr \mathcal{S}_n$ is given by $$((\sigma_1,\sigma_2,\ldots ,\sigma_n); p)^{-1}=((\sigma^{-1}_{p(1)},\sigma^{-1}_{p(2)},\ldots ,\sigma^{-1}_{p(n)}); p^{-1}).$$

Let $x = (g; p)\in G \wr \mathcal{S}_n,$ where $g = (g_1,\ldots , g_n) \in G^n$ and $p \in \mathcal{S}_n$ is written as a product of disjoint cycles. If $(i_1,i_2,\ldots, i_r)$ is a cycle of $p,$ the element $g_{i_r}g_{i_{r-1}}\ldots g_{i_1}\in G$ is determined up to conjugacy in $G$ by $g$ and $(i_1,i_2,\ldots, i_r),$ and is called the \textit{cycle product} of $x$ corresponding to the cycle $(i_1,i_2,\ldots, i_r),$ see \cite[Page $170$]{McDo}. For any conjugacy class $c\in G_\star,$ we denote by $\rho(c)$ the partition written in the exponential way where $m_i(\rho(c))$ is the number of cycles of length $i$ in $p$ whose cycle-product lies in $c$ for each integer $i\geq 1.$ Then each element $x=(g; p)\in G \wr \mathcal{S}_n$ gives rise to a family of partitions $(\rho(c))_{c\in G_\star}$ indexed by $G_\star$ 
such that 
$$\sum_{i\geq 1,c\in G_\star}im_i(\rho(c))=n.$$ 
This family of partitions is called the \textit{type} of $x$ and denoted $\ty(x).$ 
\begin{ex}\label{main_ex} When $G=\mathbb{Z}_k,$ the type of $x= (g; p)\in \mathbb{Z}_k \wr \mathcal{S}_n$ is a $k$-vector of partitions $\Lambda=(\lambda_0,\lambda_1,\ldots, \lambda_{k-1})$ where each partition $\lambda_i$ is formed out of cycles $c$ of $p$ whose cycle product equals $i.$ 
 For example, consider the element $x=(g,p)\in \mathbb{Z}_{3}\wr \mathcal{S}_{10}$ where $g=(1,0,2,0,0,1,1,2,1,0)$ and $p=(1,4)(2,5)(3)(6)(7,8,9,10).$ The cycle product of $(1,4)$ is $1+0=1,$ of $(2,5)$ is $0+0=0,$ of $(3)$ is $2,$ of $(6)$ is $1$ and of $(7,8,9,10)$ is $1+2+1+0=1$ in $\mathbb{Z}_{3}.$ Thus $\ty(x)=(\lambda_0,\lambda_1,\lambda_2)$ with $\lambda_0=(2),$ $\lambda_1=(4,2,1)$ and $\lambda_2=(1).$
\end{ex}

It turns out, see \cite[Page $170$]{McDo}, that two permutations are conjugate in $G\wr \mathcal{S}_n$ if and only if they have the same type. Thus the conjugacy classes of $G\wr \mathcal{S}_n$ can be indexed by the elements of $\mathcal{P}_n^{G_\star}.$ If $\Lambda\in \mathcal{P}_n^{G_\star},$ we will denote by $C_\Lambda$ its associated conjugacy class:
\begin{equation*}
C_\Lambda:=\lbrace x\in G\wr \mathcal{S}_n; \ty(x)=\Lambda\rbrace.
\end{equation*}
From \cite[(3.1)]{McDo}, the order of the centralizer of an element of type $\Lambda$ in $G\wr \mathcal{S}_n$ is 
\begin{equation*}
Z_\Lambda=\prod_{c\in G_\star}z_{\Lambda(c)}\xi_c^{l(\Lambda(c))},
\end{equation*}
where $\xi_c=\frac{|G|}{|c|}$ is the order of the centralizer of an element $g\in c$ in $G.$ Thus, if $\Lambda\in \mathcal{P}_n^{G_\star},$ the cardinal of $C_\Lambda$ is given by:
\begin{equation*}
|C_\Lambda|=\frac{|G\wr \mathcal{S}_n|}{Z_\Lambda}=\frac{|G|^nn!}{\prod\limits_{c\in G_\star}z_{\Lambda(c)}\xi_c^{l(\Lambda(c))}}.
\end{equation*} 

\subsection{The center of the group $G\wr \mathcal{S}_n$ algebra} The \textit{group algebra} of $G\wr \mathcal{S}_n,$ denoted by $\mathbb{C}[G\wr \mathcal{S}_n],$ is the algebra over $\mathbb{C}$ with basis the elements of the group $G\wr \mathcal{S}_n.$ The product in $\mathbb{C}[G\wr \mathcal{S}_n]$ is the linear extension of the group product in $G\wr \mathcal{S}_n.$ The \textit{center} of the group algebra $\mathbb{C}[G\wr \mathcal{S}_n],$ usually denoted by $Z(\mathbb{C}[G\wr \mathcal{S}_n]),$ is the sub-algebra of $\mathbb{C}[G\wr \mathcal{S}_n]$ of invariant elements
under the conjugation action of $G\wr \mathcal{S}_n$ on $\mathbb{C}[G\wr \mathcal{S}_n]:$
$$Z(\mathbb{C}[G\wr \mathcal{S}_n]) := \lbrace x \in  \mathbb{C}[G\wr \mathcal{S}_n] ; yx = xy~~~~ \forall y \in  G\wr \mathcal{S}_n\rbrace.$$
The conjugacy classes of $G\wr \mathcal{S}_n$ index a basis of $Z(\mathbb{C}[G\wr \mathcal{S}_n]).$ We showed in Section \ref{subsec:conj_class_wreath} that the conjugacy classes of $G\wr \mathcal{S}_n$ are indexed by the elements of $\mathcal{P}_n^{G_\star}.$ Thus, the family $({\bf C}_\Lambda)_{\Lambda\in \mathcal{P}_n^{G_\star}},$ where
\begin{equation*}
{\bf C}_\Lambda=\sum_{x\in C_\Lambda}x,
\end{equation*}
forms a linear basis for $Z(\mathbb{C}[G\wr \mathcal{S}_n]).$ Let $\Lambda$ and $\Delta$ be two elements of $\mathcal{P}_n^{G_\star},$ the \textit{structure coefficients} $c_{\Lambda\Delta}^{\Gamma}$ of the algebra $Z(\mathbb{C}[G\wr \mathcal{S}_n])$ are defined by the following equation:
\begin{equation}\label{eq_str_coeff}
\mathbf{C}_\Lambda \mathbf{C}_\Delta=\sum_{\Gamma \in \mathcal{P}_n^{G_\star}} c_{\Lambda\Delta}^{\Gamma} \mathbf{C}_\Gamma.
\end{equation}
The coefficients $c_{\Lambda\Delta}^{\Gamma}$ are non-negative integer since they count the number of pairs of elements $(x, y)\in C_\Lambda\times C_\Delta$ such that $x\cdot y = z$ for a fixed element $z\in C_\Gamma.$ However, it is a very hard problem to compute these coefficients even in particular cases. For instance, the easiest choice for $G$ is the trivial group in which the group $G\wr \mathcal{S}_n$ is isomorphic to $\mathcal{S}_n.$ There is no explicit formula to compute all the structure coefficients of the center of the symmetric group algebra. Explicit formulas for particular structure coefficients of $Z(\mathbb{C}[\mathcal{S}_n])$ appeared in many papers, for example see \cite{katriel1987explicit}, \cite{GoupilSchaefferStructureCoef} and \cite{Toutjacodesmath}.

 In \cite{FaharatHigman1959}, Farahat and Higman showed that the structure coefficients of $Z(\mathbb{C}[\mathcal{S}_n])$ are polynomials in $n$ which was later proved by Ivanov and Kerov in \cite{Ivanov1999} using a more combinatorial way. In \cite{wang2004farahat}, following the Farahat and Higman approach, Wang proved that the structure coefficients $c_{\Lambda\Delta}^{\Gamma}$ of $Z(\mathbb{C}[G\wr \mathcal{S}_n])$ are polynomials in $n.$ In the next sections we will develop a combinatorial approach in order to prove Wang's result using the Ivanov-Kerov method.

\section{$G$-partial permutations}\label{sec_3} 
A \textit{partial permutation} of $[n]$ is a pair $(d, \omega)$ consisting of an arbitrary subset $d$ of $[n]$ and an arbitrary bijection $\omega : d \longrightarrow d.$ The notion of partial permutation of $[n]$ appeared in \cite{Ivanov1999} to show by a combinatorial way that the structure coefficients of the center of the symmetric group $\mathcal{S}_n$ algebra are polynomials in $n.$

If $d$ is a subset of $[n],$ we denote by $G^n_d$ the set of vectors $g$ with $n$-coordinates such that $g_i\in G$ if $i\in d$ and $g_i$ is left blank otherwise. For example, if $G=\mathbb{Z}_3,$ $n=5$ and $d=\lbrace 1,3,4\rbrace$ then $(1,,2,0,)\in G^5_d$ but $(1,,1,1,1)\notin G^5_d.$
\begin{definition}
A $G$-partial permutation of $[n]$ is a pair $(g; (d,\omega))$ where $(d,\omega)$ is a partial permutation of $[n]$ and $g  \in G^n_d.$ 
\end{definition}
We denote by $\mathfrak{P}^G_n$ the set of all $G$-partial permutations of $[n].$ It would be clear that 
\begin{equation*}
\vert \mathfrak{P}^G_n\vert=\sum_{k=0}^n{n \choose k}k!|G|^k.
\end{equation*}

A $G$-partial permutation $(g; (d,\omega))$ of $[n]$ may be represented by a diagram obtained by drawing the two lines permutation diagram associated to $(d,\omega),$ with the nodes of the bottom row replaced by the elements $g_i$ for $i\in d.$ This representation will help us understanding the product between $G$-partial permutations of $[n]$ which will be defined later. 
\begin{ex}\label{ex:2.3} If $n=9,$ $A=\lbrace 2,4,5,6\rbrace$ and $\omega= (2, 5) (4,6),$ then we represent the element $(g; (A,\omega))$ by the following diagram
$$\begin{tikzpicture}[line cap=round,line join=round,>=triangle 45,x=.5cm,y=.5cm]
\draw [line width=1pt] (-20,4)-- (-14,2);
\draw [line width=1pt] (-14,4)-- (-20,2);
\draw [line width=1pt] (-16,4)-- (-12,2);
\draw [line width=1pt] (-12,4)-- (-16,2);
\begin{scriptsize}
\draw [fill=black] (-20,4) circle (2.5pt);
\draw[color=black] (-20.044879600000034,4.78772291818183) node {$2$};
\draw [fill=black] (-16,4) circle (2.5pt);
\draw[color=black] (-16.01860460000003,4.75551271818183) node {$4$};
\draw [fill=black] (-14,4) circle (2.5pt);
\draw[color=black] (-14.085992600000028,4.85214331818183) node {$5$};
\draw [fill=black] (-12,4) circle (2.5pt);
\draw[color=black] (-12.05675,4.81993311818183) node {$6$};
\draw [fill=black] (-14,2) circle (2.5pt);
\draw[color=black] (-14.021572200000026,1.2) node {$g_5$};
\draw [fill=black] (-20,2) circle (2.5pt);
\draw[color=black] (-20.012669400000036,1.2) node {$g_2$};
\draw [fill=black] (-12,2) circle (2.5pt);
\draw[color=black] (-12.05675,1.2) node {$g_6$};
\draw [fill=black] (-16,2) circle (2.5pt);
\draw[color=black] (-16.05081480000003,1.2) node {$g_4$};
\end{scriptsize}
\end{tikzpicture}$$
\end{ex}
The definition of type can be extended naturally to a $G$-partial permutation of $[n].$ If $x=(g; (d,\omega))$ is a $G$-partial permutation of $[n]$ and $c\in G_\star$ then let $\rho(c)$ be the partition written in the exponential way where $m_i(\rho(c))$ is the number of cycles of length $i$ in $\omega$ whose cycle-product lies in $c$ for each integer $i\geq 1.$ Define the type of $x$ to be the family of partitions $(\rho(c))_{c\in G_\star}$ indexed by $G_\star.$ It would be clear that 
$$|\ty(x)|=|d|.$$ 

If $x=(g; (d,\omega))\in \mathfrak{P}^G_n,$ we denote by $\widetilde{x}$ the element $(\widetilde{g};\widetilde{\omega})$ of $G \wr \mathcal{S}_n$ where $\widetilde{\omega}$ and $\widetilde{g}$ are defined by:
$$\widetilde{\omega}(a)=
\left\{
\begin{array}{ll}
  \omega(a) & \qquad \mathrm{if}\quad a\in d, \\
  a & \qquad \mathrm{if}\quad a\in [n]\setminus d. \\
 \end{array}
  \right. \text{ and } \widetilde{g}_i=
\left\{
\begin{array}{ll}
  g_i & \qquad \mathrm{if}\quad i\in d, \\
  1_G & \qquad \mathrm{if}\quad i\in [n]\setminus d. \\
 \end{array}
 \right.$$
The product of two $G$-partial permutations $(g; (d_1,\omega_1))$ and $(h; (d_2,\omega_2))$ of $[n]$ is defined by:
\begin{equation*}\big(g; (d_1,\omega_1)\big)\cdot \big(h; (d_2,\omega_2)\big)=\big((\widetilde{g}_{\widetilde{\omega}_{2_{|d_1\cup d_2}}^{-1}(1)}\widetilde{h}_1,\ldots ,\widetilde{g}_{\widetilde{\omega}_{2_{|d_1\cup d_2}}^{-1}(n)}\widetilde{h}_n);(d_1,\omega_1) (d_2,\omega_2)\big),\end{equation*}
where
\begin{equation*}(d_1,\omega_1) (d_2,\omega_2)=(d_1\cup d_2,\widetilde{\omega}_{1_{|d_1\cup d_2}} \widetilde{\omega}_{2_{|d_1\cup d_2}}).\end{equation*}
This product is well defined since $(d_1,\omega_1) (d_2,\omega_2)$ is a partial permutation of $[n].$ The set $\mathfrak{P}^G_n$ is a semigroup with this multiplication. The unity in $\mathfrak{P}^G_n$ is the $G$-partial permutation $((,,\ldots ,);(\emptyset,e_0))$ where $e_0$ is the trivial permutation of the empty set $\emptyset.$ We denote by $\mathcal{B}^G_n=\mathbb{C}[\mathfrak{P}^G_n]$ the algebra of the semigroup $\mathfrak{P}^G_n.$ 

\begin{ex} Reconsider the $G$-partial permutation $(g; (A,\omega))$ of Example \ref{ex:2.3} and let $(f; (B,\sigma))$ be the $G$-partial permutation of $[9]$ with $B=\lbrace 1,3,5,6,8,9\rbrace$ and $\sigma=(1,5,8)(3,9)(6)$ then the product $(g; (A,\omega))\cdot (f; (B,\sigma))$ yields the following $G$-partial permutation of $[9]$
$$\big((f_1,g_2,f_3,g_4,f_5,g_6f_6,,g_5f_8,f_9);(\lbrace 1,2,3,4,5,6,8,9\rbrace,(1,5,2,8)(3,9)(4,6))\big)$$
This can be obtained easily by drawing the diagram of $(g; (A,\omega))$ above the diagram of $(f; (B,\sigma))$ then extending both of them to $A\cup B$ as represented below
$$
\begin{tikzpicture}[line cap=round,line join=round,>=triangle 45,x=0.7cm,y=0.7cm]
\draw [line width=1pt] (-20,4)-- (-14,2);
\draw [line width=1pt] (-14,4)-- (-20,2);
\draw [line width=1pt] (-16,4)-- (-12,2);
\draw [line width=1pt] (-12,4)-- (-16,2);
\draw [line width=1pt] (-22,4)-- (-22,2);
\draw [red, line width=1pt] (-18,4)-- (-18,2);
\draw [red, line width=1pt] (-8,4)-- (-8,2);
\draw [red, line width=1pt] (-6,4)-- (-6,2);
\draw [line width=1pt] (-22,0)-- (-14,-2);
\draw [line width=1pt] (-14,0)-- (-8,-2);
\draw [line width=1pt] (-8,0)-- (-22,-2);
\draw [line width=1pt] (-18,0)-- (-6,-2);
\draw [line width=1pt] (-6,0)-- (-18,-2);
\draw [line width=1pt] (-12,0)-- (-12,-2);
\draw [red, line width=1pt] (-20,0)-- (-20,-2);
\draw [red, line width=1pt] (-16,0)-- (-16,-2);
\begin{scriptsize}
\draw [fill=black] (-20,4) circle (2.5pt);
\draw[color=black] (-20.044879600000037,4.4) node {$2$};
\draw [fill=black] (-16,4) circle (2.5pt);
\draw[color=black] (-16.018604600000035,4.4) node {$4$};
\draw [fill=black] (-14,4) circle (2.5pt);
\draw[color=black] (-14.085992600000031,4.4) node {$5$};
\draw [fill=black] (-12,4) circle (2.5pt);
\draw[color=black] (-12.05675,4.4) node {$6$};
\draw [fill=black] (-14,2) circle (2.5pt);
\draw[color=black] (-14.02157220000003,1.5989131181818232) node {$g_5$};
\draw [fill=black] (-20,2) circle (2.5pt);
\draw[color=black] (-20.01266940000004,1.5667029181818233) node {$g_2$};
\draw [fill=black] (-12,2) circle (2.5pt);
\draw[color=black] (-12.05675,1.5667029181818233) node {$g_6$};
\draw [fill=black] (-16,2) circle (2.5pt);
\draw[color=black] (-16.050814800000033,1.5989131181818232) node {$g_4$};
\draw [fill=red] (-22,4) circle (2.5pt);
\draw[color=red] (-21.945281400000038,4.4) node {$1$};
\draw [fill=red] (-22,2) circle (2.5pt);
\draw[color=red] (-21.97749160000004,1.6633335181818234) node {$1_G$};
\draw [fill=red] (-18,4) circle (2.5pt);
\draw[color=red] (-18.015637000000037,4.4) node {$3$};
\draw [fill=red] (-18,2) circle (2.5pt);
\draw[color=red] (-17.983426800000036,1.6633335181818234) node {$1_G$};
\draw [fill=red] (-8,4) circle (2.5pt);
\draw[color=red] (-8.062685200000026,4.4) node {$8$};
\draw [fill=red] (-8,2) circle (2.5pt);
\draw[color=red] (-8.062685200000026,1.6633335181818234) node {$1_G$};
\draw [fill=red] (-6,4) circle (2.5pt);
\draw[color=red] (-6.065652800000024,4.4) node {$9$};
\draw [fill=red] (-6,2) circle (2.5pt);
\draw[color=red] (-6.001232400000024,1.6633335181818234) node {$1_G$};
\draw [fill=black] (-22,0) circle (2.5pt);
\draw[color=black] (-22.10633240000004,0.5) node {$1$};
\draw [fill=black] (-14,-2) circle (2.5pt);
\draw[color=black] (-14.02157220000003,-2.5) node {$f_5$};
\draw [fill=black] (-14,0) circle (2.5pt);
\draw[color=black] (-13.98936200000003,0.5) node {$5$};
\draw [fill=black] (-8,-2) circle (2.5pt);
\draw[color=black] (-7.998264800000025,-2.5) node {$f_8$};
\draw [fill=black] (-8,0) circle (2.5pt);
\draw[color=black] (-8.030475000000026,0.5) node {$8$};
\draw [fill=black] (-22,-2) circle (2.5pt);
\draw[color=black] (-22.07412220000004,-2.5) node {$f_1$};
\draw [fill=black] (-18,0) circle (2.5pt);
\draw[color=black] (-18.176688000000034,0.5) node {$3$};
\draw [fill=black] (-6,-2) circle (2.5pt);
\draw[color=black] (-6.097863000000024,-2.5) node {$f_9$};
\draw [fill=black] (-6,0) circle (2.5pt);
\draw[color=black] (-5.872391600000024,0.5) node {$9$};
\draw [fill=black] (-18,-2) circle (2.5pt);
\draw[color=black] (-17.983426800000036,-2.5) node {$f_3$};
\draw [fill=black] (-12,0) circle (2.5pt);
\draw[color=black] (-11.96011940000003,0.5) node {$6$};
\draw [fill=black] (-12,-2) circle (2.5pt);
\draw[color=black] (-11.895699000000029,-2.5) node {$f_6$};
\draw [fill=red] (-20,0) circle (2.5pt);
\draw[color=red] (-19.980459200000038,0.5) node {$2$};
\draw [fill=red] (-20,-2) circle (2.5pt);
\draw[color=red] (-19.916038800000038,-2.5) node {$1_G$};
\draw [fill=red] (-16,0) circle (2.5pt);
\draw[color=red] (-15.986394400000034,0.5) node {$4$};
\draw [fill=red] (-16,-2) circle (2.5pt);
\draw[color=red] (-15.954184200000032,-2.5) node {$1_G$};
\end{scriptsize}
\end{tikzpicture}
$$
All extensions are drawn in red. The diagram of the product $(g; (A,\omega))\cdot(f; (B,\sigma))$ is then obtained by taking the resulted diagram of the above combination
$$
\begin{tikzpicture}[line cap=round,line join=round,>=triangle 45,x=1.4cm,y=1.4cm]
\draw [line width=1pt] (-6,0)-- (-2,-1);
\draw [line width=1pt] (-5,0)-- (1,-1);
\draw [line width=1pt] (-4,0)-- (2,-1);
\draw [line width=1pt] (-3,0)-- (-1,-1);
\draw [line width=1pt] (-2,0)-- (-5,-1);
\draw [line width=1pt] (-1,0)-- (-3,-1);
\draw [line width=1pt] (1,0)-- (-6,-1);
\draw [line width=1pt] (2,0)-- (-4,-1);
\begin{scriptsize}
\draw [fill=black] (-6,0) circle (2.5pt);
\draw[color=black] (-6,0.2) node {$1$};
\draw [fill=black] (-2,-1) circle (2.5pt);
\draw[color=black] (-2.02,-1.2) node {$f_5$};
\draw [fill=black] (-5,0) circle (2.5pt);
\draw[color=black] (-5.02,0.2) node {$2$};
\draw [fill=black] (1,-1) circle (2.5pt);
\draw[color=black] (0.98,-1.2) node {$g_5f_8$};
\draw [fill=black] (-4,0) circle (2.5pt);
\draw[color=black] (-4,0.2) node {$3$};
\draw [fill=black] (2,-1) circle (2.5pt);
\draw[color=black] (2.02,-1.2) node {$f_9$};
\draw [fill=black] (-3,0) circle (2.5pt);
\draw[color=black] (-3.06,0.2) node {$4$};
\draw [fill=black] (-1,-1) circle (2.5pt);
\draw[color=black] (-1.02,-1.2) node {$g_6f_6$};
\draw [fill=black] (-2,0) circle (2.5pt);
\draw[color=black] (-1.94,0.2) node {$5$};
\draw [fill=black] (-5,-1) circle (2.5pt);
\draw[color=black] (-4.96,-1.2) node {$g_2$};
\draw [fill=black] (-1,0) circle (2.5pt);
\draw[color=black] (-1,0.2) node {$6$};
\draw [fill=black] (-3,-1) circle (2.5pt);
\draw[color=black] (-2.98,-1.2) node {$g_4$};
\draw [fill=black] (1,0) circle (2.5pt);
\draw[color=black] (0.96,0.2) node {$8$};
\draw [fill=black] (-6,-1) circle (2.5pt);
\draw[color=black] (-6.06,-1.2) node {$f_1$};
\draw [fill=black] (2,0) circle (2.5pt);
\draw[color=black] (1.98,0.2) node {$9$};
\draw [fill=black] (-4,-1) circle (2.5pt);
\draw[color=black] (-4.02,-1.2) node {$f_3$};
\end{scriptsize}
\end{tikzpicture}
$$
\end{ex}

\section{Action of $G\wr \mathcal{S}_n$ on $\mathfrak{P}^G_n$}\label{sec_4} Any element $(g;\sigma)$ of the wreath product $G\wr \mathcal{S}_n$ can be seen as a $G$-partial permutation of $[n]$ by identifying $\sigma$ with the partial permutation $([n],\sigma).$ 
The wreath product $G\wr \mathcal{S}_n$ acts on the semigroup $\mathfrak{P}^G_n$ by:
$$(g;\sigma)\cdot \big(h;(d,\omega)\big):=\big(f;(\sigma^{-1}(d),\sigma \omega \sigma^{-1})\big),$$
where $f_i=g_{\omega^{-1}(\sigma(i))}h_{\sigma(i)}g^{-1}_{\sigma(i)}$ if $i\in \sigma^{-1}(d)$ for any $(g;\sigma)\in G\wr \mathcal{S}_n$ and $\big(h;(d,\omega)\big)\in \mathfrak{P}^G_n.$ 
The orbits of this action will be called the conjugacy classes of $\mathfrak{P}^G_n.$ Two $G$-partial permutations $(h; (d_1,\omega_1))$ and $(f; (d_2,\omega_2))$ of $[n]$ are in the same conjugacy class if and only if there exists $(g;\sigma)\in G\wr \mathcal{S}_n$ such that $(g;\sigma)\cdot (h; (d_1,\omega_1))=(f; (d_2,\omega_2)),$ that is $d_2=\sigma^{-1}(d_1),$ $\omega_2=\sigma \omega_1 \sigma^{-1}$ and $f_i=g_{\omega_1^{-1}(\sigma(i))}h_{\sigma(i)}g^{-1}_{\sigma(i)}$ for any $i\in d_2.$ 
\begin{ex} Let $\sigma=(2,3,6)(1,4)(5,7,9)(8)\in \mathcal{S}_9,$ $d=\lbrace 2,4,5,6\rbrace$ and $\omega=(2,5)(4,6).$ To obtain that 
$$(g;\sigma)\cdot \big(h;(d,\omega)\big)=\big((g_6h_4g_4^{-1},,g_4h_6g_6^{-1},,,g_5h_2g_2^{-1},,,g_2h_5g_5^{-1});(\lbrace 1,3,6,9\rbrace,(1,3)(6,9))\big),$$
we have to draw the diagram of $(g;\sigma)$ restricted to $\sigma^{-1}(d)$ then below it the diagram of $\big(h;(d,\omega)\big)$ then below it the diagram of $(g;\sigma)^{-1}$ restricted to $d$ as shown below
$$
\begin{tikzpicture}[line cap=round,line join=round,>=triangle 45,x=1cm,y=1cm]
\draw [line width=1pt] (-4,-1)-- (-1,-2);
\draw [line width=1pt] (-2,-1)-- (0,-2);
\draw [line width=1pt] (-1,-1)-- (-4,-2);
\draw [line width=1pt] (0,-1)-- (-2,-2);
\draw [line width=1pt] (-4,0)-- (0,1);
\draw [line width=1pt] (-2,0)-- (-5,1);
\draw [line width=1pt] (-1,0)-- (3,1);
\draw [line width=1pt] (0,0)-- (-3,1);
\draw [line width=1pt] (-4,-3)-- (0,-4);
\draw [line width=1pt] (-2,-3)-- (-5,-4);
\draw [line width=1pt] (-1,-3)-- (3,-4);
\draw [line width=1pt] (0,-3)-- (-3,-4);
\begin{scriptsize}
\draw [fill=black] (-4,-1) circle (2.5pt);
\draw[color=black] (-4.02,-0.7) node {$2$};
\draw [fill=black] (-1,-2) circle (2.5pt);
\draw[color=black] (-1.02,-2.17) node {$h_5$};
\draw [fill=black] (-2,-1) circle (2.5pt);
\draw[color=black] (-2.02,-0.7) node {$4$};
\draw [fill=black] (0,-2) circle (2.5pt);
\draw[color=black] (-0.04,-2.17) node {$h_6$};
\draw [fill=black] (-1,-1) circle (2.5pt);
\draw[color=black] (-1.02,-0.7) node {$5$};
\draw [fill=black] (-4,-2) circle (2.5pt);
\draw[color=black] (-4,-2.23) node {$h_2$};
\draw [fill=black] (0,-1) circle (2.5pt);
\draw[color=black] (-0.06,-0.7) node {$6$};
\draw [fill=black] (-2,-2) circle (2.5pt);
\draw[color=black] (-2.04,-2.19) node {$h_4$};
\draw [fill=black] (-4,0) circle (2.5pt);
\draw[color=black] (-3.96,-0.3) node {$g_2$};
\draw [fill=black] (0,1) circle (2.5pt);
\draw[color=black] (0.06,1.3) node {$6$};
\draw [fill=black] (-2,0) circle (2.5pt);
\draw[color=black] (-1.98,-0.3) node {$g_4$};
\draw [fill=black] (-5,1) circle (2.5pt);
\draw[color=black] (-4.96,1.3) node {$1$};
\draw [fill=black] (-1,0) circle (2.5pt);
\draw[color=black] (-1.06,-0.3) node {$g_5$};
\draw [fill=black] (3,1) circle (2.5pt);
\draw[color=black] (3,1.3) node {$9$};
\draw [fill=black] (0,0) circle (2.5pt);
\draw[color=black] (-0.04,-0.3) node {$g_6$};
\draw [fill=black] (-3,1) circle (2.5pt);
\draw[color=black] (-3,1.3) node {$3$};
\draw [fill=black] (-4,-3) circle (2.5pt);
\draw[color=black] (-4.02,-2.7) node {$2$};
\draw [fill=black] (0,-4) circle (2.5pt);
\draw[color=black] (0,-4.3) node {$g_2^{-1}$};
\draw [fill=black] (-2,-3) circle (2.5pt);
\draw[color=black] (-2,-2.7) node {$4$};
\draw [fill=black] (-5,-4) circle (2.5pt);
\draw[color=black] (-5,-4.3) node {$g^{-1}_4$};
\draw [fill=black] (-1,-3) circle (2.5pt);
\draw[color=black] (-1.02,-2.7) node {$5$};
\draw [fill=black] (3,-4) circle (2.5pt);
\draw[color=black] (2.96,-4.3) node {$g_5^{-1}$};
\draw [fill=black] (0,-3) circle (2.5pt);
\draw[color=black] (-0.08,-2.7) node {$6$};
\draw [fill=black] (-3,-4) circle (2.5pt);
\draw[color=black] (-3.06,-4.3) node {$g_6^{-1}$};
\end{scriptsize}
\end{tikzpicture}
$$
Then it would be easy to verify that $\ty\Big((g;\sigma)\cdot \big(h;(d,\omega)\big)\Big)=\ty\Big(\big(h;(d,\omega)\big)\Big)$ since the cycle product $h_5h_2$ of the cycle $(2,5)$ of $\omega$ is conjugate to the cycle product $g_2h_5g_5^{-1}g_5h_2g_2^{-1}$ of the cycle $(6,9)$ of $\sigma\omega\sigma^{-1}$ and the cycle product $h_6h_4$ of the cycle $(4,6)$ of $\omega$ is conjugate to the cycle product $g_4h_6g_6^{-1}g_6h_4g_4^{-1}$ of the cycle $(1,3)$ of $\sigma\omega\sigma^{-1}.$
\end{ex}
This shows that the conjugacy classes of $\mathfrak{P}^G_n$ can be indexed by the elements of the set $\mathcal{P}^{G_\star}_{\leq n}$ of families of partitions indexed by $G_\star$ with size less than or equal to $n.$ If $\Lambda=(\Lambda(c))_{c\in G_\star}\in \mathcal{P}^{G_\star}_{\leq n},$ the conjugacy class of $\mathfrak{P}^G_n$ associated to $\Lambda$ will be denoted $C_{\Lambda;n}$ and is defined by:
$$C_{\Lambda;n}:=\lbrace x=(h;(d,\omega))\in \mathfrak{P}^G_n\text{ such that } |d|=|\Lambda| \text{ and }\ty(x)=\Lambda\rbrace.$$

\begin{prop}\label{prop_card_C} If $\Lambda=(\Lambda(c))_{c\in G_\star}\in \mathcal{P}^{G_\star}_{\leq n},$ then:
$$\vert C_{\Lambda;n}\vert=\begin{pmatrix}
n-|\Lambda|+m_1(\lambda(\lbrace 1_G\rbrace)) \\
m_1(\lambda(\lbrace 1_G\rbrace))
\end{pmatrix} \vert C_{\underline{\Lambda}_n}\vert.$$
\end{prop}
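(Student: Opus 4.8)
The plan is to exhibit a natural surjection from $C_{\Lambda;n}$ onto the full conjugacy class $C_{\underline{\Lambda}_n}$ of $G\wr\mathcal{S}_n$ all of whose fibers have the same cardinality, namely the binomial coefficient appearing in the statement. The map will be the extension $x\mapsto \widetilde{x}$ introduced in Section~\ref{sec_3}, which sends a $G$-partial permutation $x=(h;(d,\omega))$ to the genuine element $\widetilde{x}=(\widetilde{h};\widetilde{\omega})\in G\wr\mathcal{S}_n$. Throughout I abbreviate $s=m_1(\Lambda(\{1_G\}))$.

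First I would check that this map lands in $C_{\underline{\Lambda}_n}$. If $\ty(x)=\Lambda$ and $|d|=|\Lambda|$, then $\widetilde{\omega}$ has exactly the cycles of $\omega$ together with the $n-|\Lambda|$ extra fixed points of $[n]\setminus d$; by the definition of $\widetilde{h}$, each such point $a$ carries $\widetilde{h}_a=1_G$ and is therefore a $1$-cycle whose cycle product lies in the class $\{1_G\}$. Comparing types, $\ty(\widetilde{x})$ agrees with $\Lambda$ on every class $c\neq\{1_G\}$ and its $\{1_G\}$-component is $\Lambda(\{1_G\})\cup(1^{n-|\Lambda|})$, which is precisely $\underline{\Lambda}_n$. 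Hence $\widetilde{x}\in C_{\underline{\Lambda}_n}$.

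The heart of the argument is the computation of the fibers. Fix $y=(\widetilde{h};\widetilde{\omega})\in C_{\underline{\Lambda}_n}$ and describe all $x=(h;(d,\omega))\in C_{\Lambda;n}$ with $\widetilde{x}=y$. The equality $\widetilde{x}=y$ forces $\omega=\widetilde{\omega}|_d$ and $h=\widetilde{h}|_d$, so $x$ is completely determined by its support $d$; moreover $d$ must contain every point moved by $\widetilde{\omega}$ together with every fixed point $a$ with $\widetilde{h}_a\neq 1_G$, while $[n]\setminus d$ may only consist of fixed points $a$ with $\widetilde{h}_a=1_G$. Writing $S$ for this forced set and counting via $\ty(y)=\underline{\Lambda}_n$, one finds $|S|=n-m_1(\underline{\Lambda}_n(\{1_G\}))=|\Lambda|-s$. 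The remaining candidate points are exactly the fixed points of $\widetilde{\omega}$ with trivial cycle product, of which there are $m_1(\underline{\Lambda}_n(\{1_G\}))=s+(n-|\Lambda|)$. To reach $|d|=|\Lambda|$ one must adjoin exactly $s$ of these free points to $S$, and every such choice yields a valid preimage $x\in C_{\Lambda;n}$ of type $\Lambda$. Hence each fiber has cardinality
$$\binom{s+(n-|\Lambda|)}{s}=\binom{n-|\Lambda|+m_1(\Lambda(\{1_G\}))}{m_1(\Lambda(\{1_G\}))},$$
and surjectivity follows since $S$ together with any admissible choice already produces a preimage. Multiplying the fiber size by $|C_{\underline{\Lambda}_n}|$ gives the claimed formula.

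I expect the main obstacle to be the bookkeeping that isolates exactly which points of the support are forced and which are free: one must recognize that the $1$-cycles of $\widetilde{\omega}$ with trivial cycle product are genuinely indistinguishable whether they originate from the support $d$ of $x$ or from the padding $[n]\setminus d$, and that this indistinguishability is precisely what produces the binomial factor. Verifying that $\omega$ and $h$ are forced to be the restrictions $\widetilde{\omega}|_d$ and $\widetilde{h}|_d$, so that each fiber is parametrized solely by the choice of the free points, is the other point requiring care.
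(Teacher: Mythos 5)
Your argument is correct and follows essentially the same route as the paper: you use the same extension map $\Theta(x)=\widetilde{x}$ onto $C_{\underline{\Lambda}_n}$, and your ``forced set'' $S$ is exactly the paper's $\supp(\sigma;p)$, leading to the identical fiber count $\binom{n-|\Lambda|+m_1(\Lambda(\lbrace 1_G\rbrace))}{m_1(\Lambda(\lbrace 1_G\rbrace))}$. Your write-up is in fact slightly more complete, since you explicitly verify that the map lands in $C_{\underline{\Lambda}_n}$ and that each admissible choice of free points does yield a preimage of type $\Lambda$.
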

\begin{proof} 
Consider the following mapping $$\begin{array}{ccccc}
\Theta & : & C_{\Lambda;n} & \to & C_{\underline{\Lambda}_n} \\
& & (g;(d,\omega)) & \mapsto & (\widetilde{g};\widetilde{\omega}). \\
\end{array}$$
If $v,v^{'}\in C_{\underline{\Lambda}_n}$ with $v\neq v^{'},$ we have $\Theta^{-1}(v)\cap \Theta^{-1}(v^{'})=\emptyset$ which implies that 
\begin{equation}\label{int_eq_1}
\vert C_{\Lambda;n}\vert=\sum_{v\in C_{\underline{\Lambda}_n}}\vert \Theta^{-1}(v)\vert.
\end{equation}
Let $(\sigma;p)\in C_{\underline{\Lambda}_n}$ and consider the set $\supp(\sigma;p)$ defined as follows:
$$\supp(\sigma;p)=\lbrace i\in [n] \text{ such that } p(i)\neq i \text{ or } p(i)=i \text{ and } \sigma_i\neq 1_G\rbrace.$$
Since $(\sigma;p)$ has type $\underline{\Lambda}_n$ then it would be clear that $\vert \supp(\sigma;p)\vert=\vert\Lambda\vert-m_1(\lambda(\lbrace 1_G\rbrace)).$ 
To make an element $(g;(d,\omega))\in \Theta^{-1}(\sigma;p),$  $(g;(d,\omega))$ must coincide with $(\sigma;p)$ on $\supp(\sigma;p)$ which necessarily implies that $\supp(\sigma;p)\subset d$ and we choose $m_1(\Lambda(\lbrace 1_G\rbrace))=\vert d\vert-\vert \supp(\sigma;p)\vert$ fixed points for $(d,\omega)$ among the $n-|\Lambda|+m_1(\Lambda(\lbrace 1_G\rbrace))$ fixed points of $\sigma.$ Thus for any $(\sigma;p)\in C_{\underline{\Lambda}_n}$ we have $$\vert \Theta^{-1}(\sigma;p)\vert=\begin{pmatrix}
n-|\Lambda|+m_1(\lambda(\lbrace 1_G\rbrace)) \\
m_1(\lambda(\lbrace 1_G\rbrace))
\end{pmatrix}.$$
Combining this formula with Equation (\ref{int_eq_1}) ends the proof.
\end{proof}

We extend the action of $G\wr \mathcal{S}_n$ on $\mathfrak{P}^G_n$ by linearity to an action on $\mathcal{B}^G_n:=\mathbb{C}[\mathfrak{P}^G_n],$ the algebra of the semigroup $\mathfrak{P}^G_n,$ and we denote  
$$\mathcal{A}^G_n:=\lbrace b\in \mathcal{B}^G_n \text{ such that for any $(\sigma;p)\in G\wr \mathcal{S}_n,$ } (\sigma;p)\cdot b=b \rbrace$$ 
the sub-algebra of invariant elements under this action. 
\begin{prop}\label{prop_psi}
The surjective homomorphism $$\begin{array}{ccccc}
\psi & : & \mathfrak{P}^G_n & \to & G\wr\mathcal{S}_n \\
& & (h;(d,\omega)) & \mapsto & (\widetilde{h};\widetilde{\omega}) \\
\end{array}$$
is compatible with the action of $G\wr \mathcal{S}_n$ on $\mathfrak{P}^G_n.$
\end{prop}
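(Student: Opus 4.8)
The statement packages three claims—that $\psi$ is a semigroup homomorphism, that it is surjective, and that it is \emph{compatible} with the actions. The natural reading of the last word, and the one needed later so that $\psi$ carries $\mathcal{A}^G_n$ into $Z(\mathbb{C}[G\wr\mathcal{S}_n])$, is that $\psi$ intertwines the action of $G\wr\mathcal{S}_n$ on $\mathfrak{P}^G_n$ introduced in Section \ref{sec_4} with the conjugation action of $G\wr\mathcal{S}_n$ on itself; that is,
\begin{equation*}
\psi\big((g;\sigma)\cdot x\big)=(g;\sigma)\,\psi(x)\,(g;\sigma)^{-1}
\end{equation*}
for all $(g;\sigma)\in G\wr\mathcal{S}_n$ and $x\in\mathfrak{P}^G_n$. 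The plan is to dispatch the three claims in increasing order of difficulty. Surjectivity is immediate: any $(g;\sigma)\in G\wr\mathcal{S}_n$ is the image of the $G$-partial permutation $\big(g;([n],\sigma)\big)$, for which $d=[n]$, so the extensions do nothing and $\psi\big(g;([n],\sigma)\big)=(g;\sigma)$.

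For the homomorphism property I would compare $\psi\big((g;(d_1,\omega_1))\cdot(h;(d_2,\omega_2))\big)$ with $\psi(g;(d_1,\omega_1))\,\psi(h;(d_2,\omega_2))$ coordinate by coordinate. The key observation is that extending by fixed points (for the permutation) and by $1_G$ (for the $G$-vector) turns the \emph{restricted} product defining multiplication in $\mathfrak{P}^G_n$ into the genuine product of $G\wr\mathcal{S}_n$. Indeed $\widetilde{\omega}_2$ maps $d_1\cup d_2$ bijectively to itself, so passing from $\widetilde{\omega}_i|_{d_1\cup d_2}$ to $\widetilde{\omega}_i$ creates no discrepancy on $d_1\cup d_2$, while outside $d_1\cup d_2$ both $\widetilde{\omega}_1$ and $\widetilde{\omega}_2$ act trivially and all $G$-entries are $1_G$; hence $\widetilde{\widetilde{\omega}_1|_{d_1\cup d_2}\widetilde{\omega}_2|_{d_1\cup d_2}}=\widetilde{\omega}_1\widetilde{\omega}_2$. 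Because the $G$-component in the product of $\mathfrak{P}^G_n$ and the $G$-component in the product of $G\wr\mathcal{S}_n$ are defined by the \emph{same} index shift—through $(\widetilde{\omega}_2|_{d_1\cup d_2})^{-1}$ on $d_1\cup d_2$ and through $\widetilde{\omega}_2^{-1}$ on all of $[n]$, which agree where it matters—the $G$-parts also match, and the two images coincide.

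Finally, for the compatibility I would expand both sides of the displayed identity with $x=\big(h;(d,\omega)\big)$. By the definition of the action, $(g;\sigma)\cdot x=\big(f;(\sigma^{-1}(d),\sigma\omega\sigma^{-1})\big)$ with $f_i=g_{\omega^{-1}(\sigma(i))}h_{\sigma(i)}g^{-1}_{\sigma(i)}$ for $i\in\sigma^{-1}(d)$; applying $\psi$ extends the underlying partial permutation $(\sigma^{-1}(d),\sigma\omega\sigma^{-1})$ to a genuine permutation of $[n]$ and extends $f$ to $\widetilde{f}$. On the other side, using the product and inverse formulas of $G\wr\mathcal{S}_n$ recalled in Section \ref{sec_2}, I would compute the conjugate $(g;\sigma)(\widetilde{h};\widetilde{\omega})(g;\sigma)^{-1}$ directly. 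Its permutation layer is the conjugate of $\widetilde{\omega}$ by $\sigma$, which is precisely the extension of $\sigma\omega\sigma^{-1}$; and its $G$-coordinate at $i$ works out to $g_{\omega^{-1}(\sigma(i))}h_{\sigma(i)}g^{-1}_{\sigma(i)}$ when $i\in\sigma^{-1}(d)$ and to $1_G$ otherwise, i.e.\ exactly $\widetilde{f}_i$, so the two sides agree.

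I expect the principal obstacle to lie in the bookkeeping of the $G$-components rather than in the permutation parts. In both the homomorphism and the compatibility steps one must track the index shifts—the $\widetilde{\omega}_2^{-1}$ in the semigroup product, and the inverse appearing in the group product—and verify that the resulting products of elements of $G$, together with the conjugating factors $g_{\,\cdot}$ and $g^{-1}_{\,\cdot}$, line up term by term across all of $[n]$. The permutation layer of each identity is a routine extension-of-conjugation computation, so the real content is checking that these $G$-valued coordinates agree everywhere.
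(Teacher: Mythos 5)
Your proof is correct and, for the compatibility claim, follows essentially the same route as the paper: expand $(g;\sigma)\cdot\big(h;(d,\omega)\big)$ via the definition of the action, compute the conjugate $(g;\sigma)(\widetilde{h};\widetilde{\omega})(g;\sigma)^{-1}$ via the product and inverse formulas, and match the permutation layers and the $G$-coordinates by the case split $i\in\sigma^{-1}(d)$ versus $i\notin\sigma^{-1}(d)$. The paper's proof addresses only this intertwining identity, taking surjectivity and the homomorphism property as given, so your additional verification of those two points is harmless extra thoroughness rather than a divergence in method.
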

\begin{proof}
We need to prove that for any $(g;\sigma)\in G\wr \mathcal{S}_n$ and any $\big(h;(d,\omega)\big)\in \mathfrak{P}^G_n$ we have:
$$\psi \big((g;\sigma)\cdot \big(h;(d,\omega)\big)\big)=(g;\sigma)\cdot \psi\big(\big(h;(d,\omega)\big)\big)=(g;\sigma)(\widetilde{h};\widetilde{\omega})(g;\sigma)^{-1}.$$
We have $(g;\sigma)\cdot \big(h;(d,\omega)\big)=\big(f;(\sigma^{-1}(d),\sigma \omega \sigma^{-1})\big)$
where $f_i=g_{\omega^{-1}(\sigma(i))}h_{\sigma(i)}g^{-1}_{\sigma(i)}$ if $i\in \sigma^{-1}(d)$ which implies that $\psi \big((g;\sigma)\cdot \big(h;(d,\omega)\big)\big)=\big(\widetilde{f};\widetilde{\sigma \omega \sigma^{-1}}\big)$ with
$$\widetilde{\sigma \omega \sigma^{-1}}(a)=
\left\{
\begin{array}{ll}
  (\sigma \omega \sigma^{-1})(a) & \qquad \mathrm{if}\quad a\in \sigma^{-1}(d) \\
  a & \qquad \mathrm{otherwise} \\
 \end{array}
  \right. \text{ and } \widetilde{f}_i=
\left\{
\begin{array}{ll}
  f_i & \qquad \mathrm{if}\quad i\in \sigma^{-1}(d) \\
  1_G & \qquad \mathrm{otherwise}. \\
 \end{array}
 \right.$$
On the other hand $(g;\sigma)(\widetilde{h};\widetilde{\omega})(g;\sigma)^{-1}=(r; \sigma\widetilde{\omega}\sigma^{-1})$ with $r_i=g_{\widetilde{\omega}^{-1}(\sigma(i))}\widetilde{h}_{\sigma(i)}g^{-1}_{\sigma(i)}.$ If $i\notin \sigma^{-1}(d)$ then $\sigma(i)\notin d$ which implies that $\widetilde{\omega}^{-1}(\sigma(i))=\sigma(i)$ and $\widetilde{h}_{\sigma(i)}=1_G$ which results in $r_i=1_G.$ If $i\in \sigma^{-1}(d)$ then $r_i=f_i.$ Thus $\widetilde{f}=r$ and it is easy to check that $\widetilde{\sigma \omega \sigma^{-1}}=\sigma\widetilde{\omega}\sigma^{-1}$ which ends the proof.
\end{proof}
The surjective homomorphism $\psi$ can be extended to a surjective homomorphism of algebras $\psi:\mathcal{B}^G_n\rightarrow \mathbb{C}[G\wr \mathcal{S}_n]$ and Proposition \ref{prop_psi} implies that
$$\psi(\mathcal{A}^G_n)=Z(\mathbb{C}[\mathcal{S}_n]).$$

If $\Lambda=(\Lambda(c))_{c\in G_\star}\in \mathcal{P}^{G_\star}_{\leq n},$ let us denote by ${\bf{C}}_{\Lambda;n}$ the following formal sum:
$${\bf C}_{\Lambda;n}=\sum_{(h;(d,\omega))\in C_{\Lambda;n}}(h;(d,\omega)).$$
The elements of the family $({\bf{C}}_{\Lambda;n})_{\Lambda\in \mathcal{P}^{G_\star}_{\leq n}}$ form a basis for the algebra $\mathcal{A}^G_n$ and if $\Lambda\in \mathcal{P}^{G_\star}_{\leq n},$ then by Proposition \ref{prop_card_C} we have:
\begin{equation}\label{imagepsi}
\psi({\bf{C}}_{\Lambda;n})=\begin{pmatrix}
n-|\Lambda|+m_1(\lambda(\lbrace 1_G\rbrace)) \\
m_1(\lambda(\lbrace 1_G\rbrace))
\end{pmatrix} {\bf C}_{\underline{\Lambda}_n}.
\end{equation}

\section{Action of $G\wr \mathcal{S}_\infty$ on $\mathcal{B}^G_\infty$}\label{sec_5}
Denote by $\mathcal{B}^G_\infty$ the projective limit of the algebras $\mathcal{B}^G_n$ with respect to the homomorphism $\varphi_n:\mathcal{B}^G_{n+1}\rightarrow \mathcal{B}^G_n$ defined by:
$$\varphi_n(h;(d,\omega))=
\left\{
\begin{array}{ll}
  (h;(d,\omega)) & \qquad \mathrm{if}\quad d\subset [n], \\
  0 & \qquad \mathrm{otherwise.}\quad \\
 \end{array}
 \right.$$
If $d$ is a finite subset of $\mathbb{N},$ we define $G^\infty_d$ to be the set of vectors $x$ with infinite number of coordinates such that $x_i\in G$ whenever $i\in d$ and $x_i$ is left blank otherwise. An element $b\in \mathcal{B}^G_\infty$ can be canonically written:
$$b=\sum_{k=0}^{\infty}\sum_{\vert d\vert=k}\sum_{\omega\in \mathcal{S}_d}\sum_{h\in G^\infty_d}b_{(h;(d,\omega))}(h;(d,\omega)),$$
where the $b_{(h;(d,\omega))}$'s are complex numbers. Consider the group $\mathcal{S}_\infty$ of permutations of $\mathbb{N}$ with finite support. Let the group $G\wr \mathcal{S}_\infty$ acts on $\mathcal{B}^G_\infty$ by conjugation and  denote by $\mathcal{A}^G_\infty$ the sub-algebra of invariant elements of $\mathcal{B}^G_\infty$ under this action. An element $b\in \mathcal{B}^G_\infty$ is in $\mathcal{A}^G_\infty$ if and only if:
$$b_{(h;(d,\omega))}=b_{(f;(\sigma^{-1}(d),\sigma\omega \sigma^{-1}))} \text{ for any } (g;\sigma)\in G\wr \mathcal{S}_\infty,$$
where $f_i=g_{\omega_1^{-1}(\sigma(i))}h_{\sigma(i)}g^{-1}_{\sigma(i)}$ if $i\in \sigma^{-1}(d)$ and $f_i$ is left blank if   $i\notin \sigma^{-1}(d).$

\begin{definition}
A $G$-partial permutation of $\mathbb{N}$ is a pair $(h;(d,\omega))$ where $d\subsetneq \mathbb{N}$ is a finite subset of $\mathbb{N},$ $\omega\in \mathcal{S}_d$ and $h\in G_d^\infty.$
\end{definition} 
We denote by $\mathfrak{P}_\infty^G$ the set of all $G$-partial permutations of $\mathbb{N}.$
For a family of partitions $\Lambda=(\lambda(c))_{c\in G_\star}\in \mathcal{P}^{G_\star}$ indexed by $G_\star,$ we define $C_{\Lambda;\infty}$ as follows:
$$C_{\Lambda;\infty}=\lbrace (h;(d,\omega))\in \mathfrak{P}_\infty^G\text{ such that }|d|=|\Lambda|\text{ and }\ty(h;(d,\omega))=\Lambda\rbrace.$$
Any element in $\mathcal{A}^G_\infty$ can be written in a unique way as an infinite linear combination of elements $({\bf C}_{\Lambda;\infty})_{\Lambda \in \mathcal{P}^G},$ where
$${\bf C}_{\Lambda;\infty}=\sum_{(h;(d,\omega))\in C_{\Lambda;\infty}}(h;(d,\omega)). $$
Let $\Lambda$ and $\Delta$ be two families of partitions in $\mathcal{P}^{G_\star},$ the structure coefficients $k_{\Lambda\Delta}^\Gamma$ of the algebra $\mathcal{A}^G_\infty$ are defined by:
\begin{equation}\label{eq:str_coef_A_infini}
{\bf C}_{\Lambda;\infty}{\bf C}_{\Delta;\infty}=\sum_{\Gamma \in \mathcal{P}^{G_\star}}k_{\Lambda\Delta}^\Gamma{\bf C}_{\Gamma;\infty}.
\end{equation}

\begin{prop} The function $\Fil$ defined by $\Fil({\bf C}_{\Lambda;\infty})=|\Lambda|,$ for any $\Lambda \in \mathcal{P}^{G_\star},$ is a filtration on $\mathcal{A}^G_\infty.$
\end{prop}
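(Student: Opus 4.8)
The plan is to reduce the statement to the single numerical fact that the structure coefficients $k_{\Lambda\Delta}^\Gamma$ of (\ref{eq:str_coef_A_infini}) vanish as soon as $|\Gamma| > |\Lambda| + |\Delta|$. Indeed, saying that $\Fil$ is a filtration means exactly that the product of two basis elements ${\bf C}_{\Lambda;\infty}$ and ${\bf C}_{\Delta;\infty}$ only involves basis elements ${\bf C}_{\Gamma;\infty}$ whose $\Fil$-value $|\Gamma|$ does not exceed $\Fil({\bf C}_{\Lambda;\infty}) + \Fil({\bf C}_{\Delta;\infty}) = |\Lambda| + |\Delta|$, so that the associated subspaces $\vect\{{\bf C}_{\Lambda;\infty} : |\Lambda| \le d\}$ multiply as $\mathcal{A}^G_{\infty,\le i}\cdot \mathcal{A}^G_{\infty,\le j}\subseteq \mathcal{A}^G_{\infty,\le i+j}$.

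First I would recall, as in the finite case, that $k_{\Lambda\Delta}^\Gamma$ admits a combinatorial reading: expanding the product ${\bf C}_{\Lambda;\infty}{\bf C}_{\Delta;\infty}$ as the formal sum of all products $x\cdot y$ with $(x,y)\in C_{\Lambda;\infty}\times C_{\Delta;\infty}$ and collecting terms, $k_{\Lambda\Delta}^\Gamma$ is the number of pairs $(x,y)$ with $x\cdot y = z$, for any fixed $z\in C_{\Gamma;\infty}$; this count is finite and independent of $z$ by the $G\wr\mathcal{S}_\infty$-invariance that defines $\mathcal{A}^G_\infty$. Finiteness is guaranteed because, writing $x = (g;(d_1,\omega_1))$ and $y = (h;(d_2,\omega_2))$, the domain of $x\cdot y$ is $d_1\cup d_2$ by the definition of the product in Section \ref{sec_3}, so $d_1$ and $d_2$ are forced to be subsets of the (finite) domain of $z$.

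The crux is then the elementary support estimate. Since the domain of $x\cdot y$ equals $d_1\cup d_2$, and since $|\ty(w)| = |d|$ holds for every $G$-partial permutation $w = (f;(d,\omega))$, combining $|d_1| = |\Lambda|$ and $|d_2| = |\Delta|$ gives
$$|\ty(x\cdot y)| = |d_1\cup d_2|\le |d_1| + |d_2| = |\Lambda| + |\Delta|.$$
Hence no product of an element of $C_{\Lambda;\infty}$ with an element of $C_{\Delta;\infty}$ can have type $\Gamma$ when $|\Gamma| > |\Lambda| + |\Delta|$; in particular a fixed $z\in C_{\Gamma;\infty}$ then has no such factorisation, so $k_{\Lambda\Delta}^\Gamma = 0$, which is what we want.

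I expect the only point requiring care to be the justification of the factorisation count in the infinite setting, namely that (\ref{eq:str_coef_A_infini}) is well posed and that the coefficient of each ${\bf C}_{\Gamma;\infty}$ is indeed the finite, $z$-independent number described above. Once that bookkeeping is in place, the argument rests entirely on the set-theoretic inequality $|d_1\cup d_2|\le |d_1|+|d_2|$ together with the domain-of-product rule, so the mathematical content is genuinely short.
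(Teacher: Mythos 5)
Your proposal is correct and follows essentially the same route as the paper: reduce the filtration inequality to the vanishing of $k_{\Lambda\Delta}^\Gamma$ for $|\Gamma|>|\Lambda|+|\Delta|$, interpret that coefficient as a count of factorisations of a fixed element of $C_{\Gamma;\infty}$, and conclude from the fact that the product of $(g_1;(d_1,\omega_1))$ and $(g_2;(d_2,\omega_2))$ has domain $d_1\cup d_2$, whence $|\Gamma|=|d_1\cup d_2|\le|d_1|+|d_2|=|\Lambda|+|\Delta|$. Your extra remarks on finiteness and $z$-independence of the count are sound bookkeeping that the paper leaves implicit.
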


\begin{proof} We need to prove that $\Fil({\bf C}_{\Lambda;\infty}{\bf C}_{\Delta;\infty})\leq \Fil({\bf C}_{\Lambda;\infty})+\Fil({\bf C}_{\Delta;\infty})$ for any two families of partitions $\Lambda,\Delta\in \mathcal{P}^{G_\star}.$ For this let $\Gamma$ be a family of partitions in $\mathcal{P}^{G_\star}$ for which $k_{\Lambda\Delta}^\Gamma$ of Equation (\ref{eq:str_coef_A_infini}) is a non-zero coefficient. By its definition, $k_{\Lambda\Delta}^\Gamma$ counts the number of pairs of $G$-partial permutations $\big( (g_1;(d_1,\omega_1)),(g_2;(d_2,\omega_2))\big)\in C_{\Lambda;\infty}\times C_{\Delta;\infty}$ such that $$(g_1;(d_1,\omega_1)).(g_2;(d_2,\omega_2))=(g;(d,\omega))$$ where $(g;(d,\omega))$ is a fixed $G$-partial permutation belonging to $C_{\Gamma;\infty}.$ It would be then sufficient to remark that, when multiplying $(g_1;(d_1,\omega_1))$ by $(g_2;(d_2,\omega_2)),$ the permutation $\widetilde{\omega}_{1_{|d_1\cup d_2}} \widetilde{\omega}_{2_{|d_1\cup d_2}}$ acts on at most $|d_1\cup d_2|$ elements. This means that each family of partitions $\Gamma$ that appears in the sum of Equation (\ref{eq:str_coef_A_infini}) must satisfy 
\begin{equation}\label{majoration}
\max(|\Lambda|,|\Delta|)\leq |\Gamma|\leq |\Lambda|+|\Delta|,
\end{equation} 
which ends the proof.
\end{proof}
\begin{remark} More filtrations on $\mathcal{A}^G_\infty$ may exist as suggested by \cite{Ivanov1999}. In this paper, we will only use the above proved one.
\end{remark}
We denote by $\Proj_n$ the natural projection homomorphism between $\mathcal{B}^G_\infty$ and $\mathcal{B}^G_n$ defined on the generating element of $\mathcal{B}^G_\infty$ by
$$\Proj_n(h;(d,\omega))=
\left\{
\begin{array}{ll}
  (h;(d,\omega)) & \qquad \mathrm{if}\quad d\subset [n], \\
  0 & \qquad \mathrm{otherwise.}\quad \\
 \end{array}
 \right.$$
If $\Lambda\in \mathcal{P}^{G_\star},$ then we have: 
$$\Proj_n({\bf C}_{\Lambda;\infty})=
\left\{
\begin{array}{ll}
  {\bf C}_{\Lambda;n} & \qquad \mathrm{if}\quad |\Lambda|\leq n, \\
  0 & \qquad \mathrm{otherwise.}\quad \\
 \end{array}
 \right.$$
By Equation \eqref{eq:str_coef_A_infini}, $k_{\Lambda\Delta}^{\Gamma}$ are also the structure coefficients of the algebra $\mathcal{A}^G_n:$
\begin{equation*}
{\bf C}_{\Lambda;n}{\bf C}_{\Delta;n}=\sum_{\Gamma\in \mathcal{P}^{G_\star}_{\leq n},\atop{ |\Gamma|\leq |\Lambda|+|\Delta|}}k_{\Lambda\Delta}^{\Gamma}{\bf C}_{\Gamma;n},
\end{equation*}
where $\Lambda$ and $\Delta$ are two families of partitions belonging to $\mathcal{P}^{G_\star}_{\leq n}.$ In other words, the structure coefficients $k_{\Lambda\Delta}^{\Gamma}$ of $\mathcal{A}^G_\infty$ do not depend on $n$ and they are the structure coefficients of $\mathcal{A}^G_n$ for any $n.$

\section{Polynomiality of the structure coefficients of $Z(\mathbb{C}[G\wr\mathcal{S}_n])$}\label{sec_6}
In this section we will prove our main result in Theorem \ref{main_theorem}. For this we will need the following lemma.
\begin{lem}\label{lem_main_theo}
If $\Gamma\in \mathcal{P}^{G_\star}_{\leq n}$ then we have ${\bf C}_{\underline{\Gamma}_n}={\bf C}_{\underline{\Gamma^j}_n}$ for any $0\leq j\leq n-|\Gamma|,$ where $\Gamma^j=(\gamma^j(c))_{c\in G_\star}$ is the family of partitions of size $|\Gamma|+j$ indexed by $G_\star$ and defined by $\gamma^j(\lbrace 1_G\rbrace)=\gamma(\lbrace 1_G\rbrace)\cup (1^j)$ and $\gamma^j(c)=\gamma(c)$ if $c\neq \lbrace 1_G\rbrace.$
\end{lem}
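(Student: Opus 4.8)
The plan is to prove that the two families of partitions indexing the two class sums, namely $\underline{\Gamma}_n$ and $\underline{\Gamma^j}_n$, are literally the \emph{same} element of $\mathcal{P}_n^{G_\star}$. Once this is established, the conclusion is immediate, since ${\bf C}_\Lambda$ depends only on its index $\Lambda$: if $\underline{\Gamma^j}_n=\underline{\Gamma}_n$, then the conjugacy classes $C_{\underline{\Gamma^j}_n}$ and $C_{\underline{\Gamma}_n}$ of $G\wr\mathcal{S}_n$ are the same set, so their formal sums coincide.

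To compare the two families I would proceed component by component over $c\in G_\star$. For every $c\neq \lbrace 1_G\rbrace$ there is nothing to check: neither the passage $\Gamma\mapsto\Gamma^j$ nor the underlining operation $\Lambda\mapsto\underline{\Lambda}_n$ alters such a component, so $\underline{\Gamma^j}_n(c)=\gamma^j(c)=\gamma(c)=\underline{\Gamma}_n(c)$. The only component carrying any content is $\lbrace 1_G\rbrace$. Here I would expand the definition of underlining, using $|\Gamma^j|=|\Gamma|+j$, to get
$$\underline{\Gamma^j}_n(\lbrace 1_G\rbrace)=\gamma^j(\lbrace 1_G\rbrace)\cup (1^{n-|\Gamma|-j})=\gamma(\lbrace 1_G\rbrace)\cup (1^{j})\cup (1^{n-|\Gamma|-j}),$$
and then invoke the fact that the union of partitions adds multiplicities in the exponential notation, so that $(1^{j})\cup (1^{n-|\Gamma|-j})=(1^{n-|\Gamma|})$. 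This yields $\underline{\Gamma^j}_n(\lbrace 1_G\rbrace)=\gamma(\lbrace 1_G\rbrace)\cup (1^{n-|\Gamma|})=\underline{\Gamma}_n(\lbrace 1_G\rbrace)$, as required, and combining the two cases gives $\underline{\Gamma^j}_n=\underline{\Gamma}_n$.

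I do not expect a genuine obstacle here: the statement is a bookkeeping identity asserting that the two ``append trailing ones'' operations compose as expected, and the whole argument is an unwinding of the definitions of the underlining map and of the union of partitions. The only place where the hypothesis $0\leq j\leq n-|\Gamma|$ is used is to guarantee $n-|\Gamma|-j\geq 0$, so that $(1^{n-|\Gamma|-j})$ is a legitimate partition and the displayed manipulation makes sense.
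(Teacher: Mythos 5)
Your proof is correct: the paper in fact states Lemma \ref{lem_main_theo} without any proof, and your argument --- showing $\underline{\Gamma^j}_n=\underline{\Gamma}_n$ componentwise, with the only computation being $(1^{j})\cup(1^{n-|\Gamma|-j})=(1^{n-|\Gamma|})$ in the $\lbrace 1_G\rbrace$ component --- is exactly the routine definition-unwinding the author leaves implicit. Nothing is missing, and your remark on where the hypothesis $0\leq j\leq n-|\Gamma|$ is used is accurate.
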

\begin{theoreme}\label{main_theorem}
Let $\Lambda, \Delta$ and $\Gamma$ be three proper families of partitions indexed by $G_\star$ and let $n$ be a natural number with $n\geq |\Lambda|, |\Delta|,|\Gamma|.$ The structure coefficient $c_{\Lambda\Delta}^{\Gamma}(n)$ is a polynomial in $n$ with degree $\displaystyle \max_{\overset{0\leq j\leq n-\vert\Gamma\vert}{k_{\Lambda\Delta}^{\Gamma^j}\neq 0}}j$ that can be written:
\begin{equation*}
c_{\Lambda\Delta}^{\Gamma}(n)=\sum_{j=0}^{n-\vert\Gamma\vert} k_{\Lambda\Delta}^{\Gamma^j}\begin{pmatrix}
n-|\Gamma| \\
j
\end{pmatrix},
\end{equation*}
where the coefficients $k_{\Lambda\Delta}^{\Gamma^j}$ are independant integers of $n.$
\end{theoreme}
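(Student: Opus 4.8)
The plan is to push the product relation of the universal algebra down to $Z(\mathbb{C}[G\wr\mathcal{S}_n])$ through the homomorphism $\psi$ and then read off one coefficient, where throughout $c_{\Lambda\Delta}^{\Gamma}(n)$ denotes the structure coefficient governing $\mathbf{C}_{\underline{\Lambda}_n}\mathbf{C}_{\underline{\Delta}_n}$ in $Z(\mathbb{C}[G\wr\mathcal{S}_n])$, i.e.\ the coefficient of $\mathbf{C}_{\underline{\Gamma}_n}$. First I would note that, since $\Lambda$ and $\Delta$ are proper, $m_1(\Lambda(\{1_G\}))=m_1(\Delta(\{1_G\}))=0$, so Equation \eqref{imagepsi} gives $\psi(\mathbf{C}_{\Lambda;n})=\mathbf{C}_{\underline{\Lambda}_n}$ and $\psi(\mathbf{C}_{\Delta;n})=\mathbf{C}_{\underline{\Delta}_n}$. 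Applying the algebra homomorphism $\psi$ to the identity $\mathbf{C}_{\Lambda;n}\mathbf{C}_{\Delta;n}=\sum_{\Gamma'}k_{\Lambda\Delta}^{\Gamma'}\mathbf{C}_{\Gamma';n}$ established at the end of Section \ref{sec_5}, and rewriting each image by \eqref{imagepsi}, yields
$$\mathbf{C}_{\underline{\Lambda}_n}\mathbf{C}_{\underline{\Delta}_n}=\sum_{\Gamma'\in\mathcal{P}^{G_\star}_{\leq n}}k_{\Lambda\Delta}^{\Gamma'}\binom{n-|\Gamma'|+m_1(\gamma'(\{1_G\}))}{m_1(\gamma'(\{1_G\}))}\mathbf{C}_{\underline{\Gamma'}_n},$$
which is legitimate because $\psi(\mathcal{A}^G_n)=Z(\mathbb{C}[G\wr\mathcal{S}_n])$ and each $\mathbf{C}_{\underline{\Gamma'}_n}$ is a genuine basis element of the center.

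The key step is to collect on the right-hand side exactly those terms whose image is the fixed basis vector $\mathbf{C}_{\underline{\Gamma}_n}$. I would argue that $\underline{\Gamma'}_n=\underline{\Gamma}_n$ forces $\Gamma'(c)=\Gamma(c)$ for all $c\neq\{1_G\}$ and, because $\Gamma$ is proper so that $\Gamma(\{1_G\})$ has no part equal to $1$, it forces $\Gamma'(\{1_G\})=\Gamma(\{1_G\})\cup(1^{j})$ for some $j\geq 0$; that is, $\Gamma'=\Gamma^{j}$, and Lemma \ref{lem_main_theo} indeed guarantees $\mathbf{C}_{\underline{\Gamma^{j}}_n}=\mathbf{C}_{\underline{\Gamma}_n}$. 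For these indices $|\Gamma^{j}|=|\Gamma|+j$ and $m_1(\gamma^{j}(\{1_G\}))=j$, so the binomial coefficient collapses to $\binom{n-|\Gamma|}{j}$. Since the family $(\mathbf{C}_{\Gamma''})_{\Gamma''\in\mathcal{P}^{G_\star}_n}$ is a basis, comparing the coefficient of $\mathbf{C}_{\underline{\Gamma}_n}$ on both sides gives
$$c_{\Lambda\Delta}^{\Gamma}(n)=\sum_{j=0}^{n-|\Gamma|}k_{\Lambda\Delta}^{\Gamma^{j}}\binom{n-|\Gamma|}{j}.$$

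Finally I would settle polynomiality and the degree. By the filtration bound \eqref{majoration}, $k_{\Lambda\Delta}^{\Gamma^{j}}=0$ whenever $|\Gamma|+j>|\Lambda|+|\Delta|$, so only finitely many of these coefficients are nonzero and none depends on $n$; moreover $\binom{n-|\Gamma|}{j}$ vanishes at an integer $n$ as soon as $j>n-|\Gamma|$, so extending the summation range up to $|\Lambda|+|\Delta|-|\Gamma|$ changes nothing and exhibits $c_{\Lambda\Delta}^{\Gamma}(n)$ as one fixed polynomial in $n$. Each $\binom{n-|\Gamma|}{j}$ is a polynomial of degree $j$ with positive leading coefficient $1/j!$, and since the $k_{\Lambda\Delta}^{\Gamma^{j}}$ are non-negative integers there can be no cancellation among top-degree terms; hence the degree equals the largest $j$ with $k_{\Lambda\Delta}^{\Gamma^{j}}\neq 0$, as claimed. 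The main obstacle is the collapsing argument of the middle paragraph: one must correctly determine which indices $\Gamma'$ are identified under $\Gamma'\mapsto\underline{\Gamma'}_n$, and this is precisely where the properness of $\Gamma$ and Lemma \ref{lem_main_theo} are essential, so that the multiplicity of each surviving term is accounted for exactly by the binomial factor and the range of $j$.
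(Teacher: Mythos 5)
Your proposal is correct and follows essentially the same route as the paper: apply $\psi$ to the product relation in $\mathcal{A}^G_n$, use properness of $\Lambda,\Delta$ so that Equation \eqref{imagepsi} gives no extra binomial factor on the left, regroup the right-hand side via Lemma \ref{lem_main_theo} into terms $\Gamma^j$ over a fixed proper $\Gamma$, and simplify the binomial coefficient to $\binom{n-|\Gamma|}{j}$. Your added remarks on the degree (non-negativity of the $k_{\Lambda\Delta}^{\Gamma^j}$ preventing cancellation of leading terms) make explicit a point the paper leaves implicit, but the argument is the same.
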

\begin{proof}
If $\Lambda$ and $\Delta$ are two proper partitions then by Equation \eqref{imagepsi} we have $\psi\big({\bf C}_{\Lambda;n}\big)={\bf C}_{\underline{\Lambda}_n}$ and $\psi\big({\bf C}_{\Delta;n}\big)={\bf C}_{\underline{\Delta}_n}.$
Recall the following equation in $\mathcal{A}^G_n:$
\begin{equation*}
{\bf C}_{\Lambda;n}{\bf C}_{\Delta;n}=\sum_{\Gamma\in \mathcal{P}^{G_\star}_{\leq n},\atop{ |\Gamma|\leq |\Lambda|+|\Delta|}}k_{\Lambda\Delta}^{\Gamma}{\bf C}_{\Gamma;n},
\end{equation*}
Apply $\psi$ to get: 
\begin{equation*}
{\bf C}_{\underline{\Lambda}_n}{\bf C}_{\underline{\Delta}_n}=\sum_{\Gamma\in \mathcal{P}^{G_\star}_{\leq n},\atop{ |\Gamma|\leq |\Lambda|+|\Delta|}}k_{\Lambda\Delta}^{\Gamma}\begin{pmatrix}
n-|\Gamma|+m_1(\gamma(\lbrace 1_G\rbrace)) \\
m_1(\gamma(\lbrace 1_G\rbrace))
\end{pmatrix} {\bf C}_{\underline{\Gamma}_n}.
\end{equation*}
Thus, by Lemma \ref{lem_main_theo}, the right hand side summation of the above equation can be written:
\begin{equation*}
\sum_{\Gamma\in \mathcal{PP}^{G_\star}_{\leq n},\atop{ |\Gamma|\leq |\Lambda|+|\Delta|}} \left[\sum_{j=0}^{n-\vert\Gamma\vert} k_{\Lambda\Delta}^{\Gamma^j}\begin{pmatrix}
n-|\Gamma^j|+m_1(\gamma^j(\lbrace 1_G\rbrace)) \\
m_1(\gamma^j(\lbrace 1_G\rbrace))
\end{pmatrix}\right] {\bf C}_{\underline{\Gamma}_n}.
\end{equation*}
After simplification, we obtain:
\begin{equation*}
\sum_{\Gamma\in \mathcal{PP}^{G_\star}_{\leq n},\atop{ |\Gamma|\leq |\Lambda|+|\Delta|}} \left[\sum_{j=0}^{n-\vert\Gamma\vert} k_{\Lambda\Delta}^{\Gamma^j}\begin{pmatrix}
n-|\Gamma| \\
j
\end{pmatrix}\right] {\bf C}_{\underline{\Gamma}_n},
\end{equation*}
which ends the proof.
\end{proof}

\begin{ex} Let $p>2$ be a prime number and consider $G=\mathbb{Z}_p.$ Recall that the type of $x= (g; p)\in \mathbb{Z}_p \wr \mathcal{S}_n$ is a $p$-vector of partitions $\Lambda=(\lambda_0,\lambda_1,\ldots, \lambda_{p-1})$ where each partition $\lambda_i$ is formed out of cycles $c$ of $p$ whose cycle product equals $i.$ If $0\leq i\leq p-1$ and $\rho$ is a partition then we denote by $\rho^i$ the $p$-vector of partitions $(\lambda_0,\lambda_1,\ldots, \lambda_{p-1})$ where $\lambda_i=\rho$ and $\lambda_s=\emptyset$ if $s\neq i.$ If $\rho$ is a proper partition then it would be clear that $\rho^i$ is a proper family of partitions. We have:
$${\bf C}_{(1)^i;\infty}=\sum_{j\in \mathbb{N}^\star}\big(~ \widehat{i}^j;(\lbrace j\rbrace,\Id_{\lbrace j\rbrace})~\big), $$
where $\widehat{i}^j\in G^\infty_{\lbrace j\rbrace}$ is the vector $x$ with infinite number of coordinates such that $x_j=i$ and $x_v$ is left blank if $v\neq j.$ It would be clear that $(1)^i$ is proper if and only if $0< i\leq p-1.$ Suppose that $j<r$ then we have $$\big(~ \widehat{i}^j;(\lbrace j\rbrace,\Id_{\lbrace j\rbrace})~\big)\big(~ \widehat{t}^r;(\lbrace r\rbrace,\Id_{\lbrace r\rbrace})~\big)=\big(~ \widehat{i}^j+\widehat{t}^r;(\lbrace j,r\rbrace,\Id_{\lbrace j,r\rbrace})~\big).$$
This can be deduced by drawing the multiplication diagram as below:
$$
\begin{tikzpicture}[line cap=round,line join=round,>=triangle 45,x=1cm,y=1cm]
\draw [line width=2pt] (-4,3)-- (-4,2);
\draw [line width=2pt] (-1,1)-- (-1,0);
\draw [line width=2pt] (-1,3)-- (-1,2);
\draw [line width=2pt] (-4,1)-- (-4,0);
\begin{scriptsize}
\draw [fill=black] (-4,3) circle (2.5pt);
\draw[color=black] (-4.02,3.3) node {$j$};
\draw [fill=black] (-4,2) circle (2.5pt);
\draw[color=black] (-4.02,1.7) node {$i$};
\draw [fill=black] (-1,1) circle (2.5pt);
\draw[color=black] (-1.02,1.3) node {$r$};
\draw [fill=black] (-1,0) circle (2.5pt);
\draw[color=black] (-1.04,-0.3) node {$t$};
\draw [fill=black] (-1,3) circle (2.5pt);
\draw[color=black] (-0.84,3.3) node {$r$};
\draw [fill=black] (-1,2) circle (2.5pt);
\draw[color=black] (-0.98,1.7) node {$0$};
\draw [fill=black] (-4,1) circle (2.5pt);
\draw[color=black] (-4.04,1.3) node {$j$};
\draw [fill=black] (-4,0) circle (2.5pt);
\draw[color=black] (-4.02,-0.3) node {$0$};
\end{scriptsize}
\end{tikzpicture}
$$
In addition, we have:
$$\big(~ \widehat{i}^j;(\lbrace j\rbrace,\Id_{\lbrace j\rbrace})~\big)\big(~ \widehat{t}^j;(\lbrace j\rbrace,\Id_{\lbrace j\rbrace})~\big)=\big(~ \widehat{i+t}^j;(\lbrace j\rbrace,\Id_{\lbrace j\rbrace})~\big),$$
where the sum $i+t$ is taken modulo $p.$
Thus, if $0< i\leq p-1,$ then
\begin{equation}\label{eq_ex_pol}
{\bf C}_{(1)^i;\infty}{\bf C}_{(1)^t;\infty}=
\left\{
\begin{array}{ll}
  {\bf C}_{(1)^{2i};\infty}+2{\bf C}_{(1^2)^{i};\infty} & \qquad \mathrm{if}\quad t=i, \\
  {\bf C}_{(1)^{i+t};\infty}+2{\bf C}_{(1)^{i}\cup (1)^{t};\infty} & \qquad \mathrm{otherwise,}\quad \\
 \end{array}
 \right.
\end{equation}
where $(1)^{i}\cup (1)^{t}$ is the $p$-vector of partitions $(\lambda_0,\lambda_1,\ldots, \lambda_{p-1})$ with $\lambda_i=(1),$ $\lambda_t=(1)$ and $\lambda_s=\emptyset$ if $s\notin \lbrace i,t\rbrace.$ The coefficient $2$ of ${\bf C}_{(1)^{i}\cup (1)^{t};\infty}$ in the right hand side of Equation (\ref{eq_ex_pol}) is due to the fact that both products $\big(~ \widehat{i}^j;(\lbrace j\rbrace,\Id_{\lbrace j\rbrace})~\big)\big(~ \widehat{t}^r;(\lbrace r\rbrace,\Id_{\lbrace r\rbrace})~\big)$ and $\big(~ \widehat{t}^r;(\lbrace r\rbrace,\Id_{\lbrace r\rbrace})~\big)\big(~ \widehat{i}^j;(\lbrace j\rbrace,\Id_{\lbrace j\rbrace})~\big)$ yields the same $\mathbb{Z}_p$-partial permutation $\big(~ \widehat{i}^j+\widehat{t}^r;(\lbrace j,r\rbrace,\Id_{\lbrace j,r\rbrace})~\big).$ If $n\geq 2,$ by applying $\theta_n$ then $\psi$ on Equation (\ref{eq_ex_pol}) we obtain:

\begin{equation*}
{\bf C}_{\underline{(1)^i}_n}{\bf C}_{\underline{(1)^i}_n}={\bf C}_{\underline{(1)^{2i}}_n}+2{\bf C}_{\underline{(1^2)^{i}}_n} \text{if $i\neq 0$},
\end{equation*}

\begin{equation*}
{\bf C}_{\underline{(1)^i}_n}{\bf C}_{\underline{(1)^t}_n}={\bf C}_{\underline{(1)^{i+t}}_n}+2{\bf C}_{\underline{(1)^{i}\cup (1)^{t}}_n} \text{if $i,t\neq 0,$ $i\neq t$ and $i+t\neq 0$ (mod $p$)},
\end{equation*}
and
\begin{equation*}
{\bf C}_{\underline{(1)^i}_n}{\bf C}_{\underline{(1)^t}_n}={n\choose 1}{\bf C}_{\underline{(1)^{i+t}}_n}+2{\bf C}_{\underline{(1)^{i}\cup (1)^{t}}_n} \text{ if $i,t\neq 0,$ $i\neq t$ and $i+t= 0$ (mod $p$)}.
\end{equation*}
\end{ex}

\section{Irreducible characters of $G\wr \mathcal{S}_n$ and symmetric functions} 

In this section we will recall all the necessary definitions and results from the theory of summetric functions in order to prove that the algebra $\mathcal{A}_\infty^G$ is isomorphic to an algebra of shifted symmetric functions on $|G^\star|$ alphabets.

\subsection{The case $G=\lbrace 1_G\rbrace$} When $G=\lbrace 1_G\rbrace,$ the wreath product $G\wr \mathcal{S}_n$ is isomorphic to the symmetric group $\mathcal{S}_n.$ The irreducible $\mathcal{S}_n$-modules are indexed by partitions of $n.$ If $\lambda\in \mathcal{P}_n,$ we will denote by $V^\lambda$ its associated irreducible $\mathcal{S}_n$-module and by $\chi^\lambda$ its character. It is well known that both the power sum functions $(p_\lambda)_{\lambda}$ and the Schur functions $(s_\lambda)_{\lambda},$ indexed by partitions, are basis families for the algebra $\mathrm{A}$ of symmetric functions. The transition matrix between these two bases is given by the following formula of Frobenius:
\begin{equation} \label{Frob_form}
p_\delta=\sum_{\rho \atop{|\rho|=|\delta|}}\chi^{\rho}_{\delta}s_\rho,
\end{equation}
where $\chi^{\rho}_{\delta}$ denotes the value of the character $\chi^{\rho}$ on any permutation of cycle-type $\delta.$

A shifted symmetric function $f$ in infinitely many variables $(x_1,x_2,\ldots)$ is a family $(f_i)_{i>1}$ that satisfies the following two properties: 
\begin{enumerate}
\item[1.] $f_i$ is a symmetric polynomial in $(x_1 -1,x_2 -2 ,\ldots ,x_i-i).$ 
\item[2.] $f_{i+1}(x_1,x_2,\ldots ,x_i,0) = f_i(x_1,x_2,\ldots ,x_i).$ 
\end{enumerate} 
The set of all shifted symmetric functions is an algebra which we shall denote $\mathrm{A}^{\#}.$ In \cite{okounkov1997shifted}, Okounkov and Olshanski gave a linear isomorphism $\varphi:\mathrm{A}\rightarrow \mathrm{A}^{\#}.$ For any partition $\lambda,$ the images of the power sum function $p_\lambda$ and the Schur function $s_\lambda$ by $\varphi$ are the shifted power symmetric function $p^{\#}_\lambda$ and the shifted Schur function $s^{\#}_\lambda.$ By applying $\varphi$ to the Frobenius relation given in Equation (\ref{Frob_form}), we get:

\begin{equation}\label{shif_Frob_form}
p^{\#}_\delta=\sum_{\rho \atop{|\rho|=|\delta|}}\chi^{\rho}_{\delta}s^{\#}_\rho.
\end{equation}

If $f\in \mathrm{A}^{\#}$ and if $\lambda=(\lambda_1,\lambda_2,\cdots ,\lambda_l)$ is a partition, we denote by $f(\lambda)$ the value $f_l(\lambda_1,\lambda_2,\cdots ,\lambda_l).$ By \cite{okounkov1997shifted}, any shifted symmetric function is determined by its values on partitions. The vanishing characterization of the shifted symmetric functions given in \cite{okounkov1997shifted} states that $s_\rho^{\#}$ is the unique shifted symmetric function of degree at most $|\rho|$ such that 
\begin{equation}\label{characterisation}
s^{\#}_\rho(\lambda)=\left\{
\begin{array}{ll}
      \frac{(|\lambda|\downharpoonright |\rho|)}{\dim \lambda} f^{\lambda/\rho}& \text{ if } \rho\subseteq \lambda \\
      0 & \text{ otherwise } \\
\end{array} 
\right. 
\end{equation}
where $(|\lambda|\downharpoonright |\rho|):=|\lambda|(|\lambda|-1)\cdots (|\lambda|-|\rho|+1)$ is the falling factorial and $f^{\lambda/\rho}$ is the number of skew standard tableaux of shape $\lambda/\rho.$ Using the following branching rule for characters of the symmetric groups
\begin{equation}\label{branching_rule}
\chi^{\lambda}_{\rho\cup (1^{|\lambda|-|\rho|})}=\sum_{\nu; |\nu|=|\rho|}f^{\lambda/\nu}\chi^\nu_\rho,
\end{equation}
one can verify using formulas (\ref{shif_Frob_form}) and (\ref{characterisation}) that
$$p^{\#}_\delta(\lambda)=\left\{
\begin{array}{ll}
      \frac{(|\lambda|\downharpoonright |\delta|)}{\dim \lambda} \chi^{\lambda}_{\underline{\delta}_{|\lambda|}}& \text{ if } |\lambda|\geq |\delta| \\
      0 & \text{ otherwise }. \\
\end{array} 
\right. 
$$
Using this formula, Ivanov and Kerov showed in \cite[Theorem 9.1]{Ivanov1999} that the mapping $F:\mathcal{A}_\infty^{\lbrace 1_G\rbrace}\rightarrow \mathrm{A}^{\#}$ defined on the basis elements of $\mathcal{A}_\infty^{\lbrace 1_G\rbrace}$ by $$F(\mathbf{C}_\delta)=z_\delta^{-1}p^{\#}_\delta$$ is an isomorphism of algebras.

\subsection{The general case}
We refer to \cite[Appendix B]{McDo} for the results of the representation theory of wreath products presented in this section. Let $(P_r(c))_{r\geq 1, c\in G_\star}$ be a family of independent indeterminates over $\mathbb{C}.$ For each $c\in G_\star,$ we may think of $P_r(c)$ as the $r^{th}$ power sum in a sequence of variables $x_c=(x_{ic})_{i\geq 1}.$ Let us denote by $\mathrm{A}^G$ the algebra over $\mathbb{C}$ with algebraic basis the elements $P_r(c)$
$$\mathrm{A}^G:=\mathbb{C}[P_r(c);r\geq 1, c\in G_\star].$$
If $\rho=(\rho_1,\rho_2,\cdots, \rho_l)$ is an arbitrary partition and $c\in G_\star,$ we define $P_\rho(c)$ to be the product of $P_{\rho_i}(c),$
$$P_\rho(c):=P_{\rho_{1}}(c)P_{\rho_2}(c)\cdots P_{\rho_l}(c).$$
The family $(P_\Lambda)_{\Lambda \in \mathcal{P}^{G_\star}},$ where
$$P_\Lambda:=\prod_{c\in G_\star}P_{\Lambda(c)}(c),$$
forms a linear basis for $\mathrm{A}^G.$ That is any element $f\in \mathrm{A}^G$ can be written $f=\sum\limits_{\Lambda \in \mathcal{P}^{G_\star}} f_\Lambda P_\Lambda$ where all but a finite number of the coefficients $f_\Lambda\in \mathbb{C}$ are zero. If we assign degree $r$ to $P_r(c),$ then 
$$\mathrm{A}^G=\bigoplus_{n\geq 0}\mathrm{A}_n^G$$
is a graded $\mathbb{C}$-algebra where $\mathrm{A}_n^G$ is the algebra spanned by all $P_\Lambda$ where $\Lambda\in \mathcal{P}^{G_\star}_n.$ The algebra $\mathrm{A}^G$ can be equipped with a hermitian scalar product defined by 
$$<f,g>=\sum_\Lambda f_\Lambda \bar{g}_\Lambda Z_\Lambda$$
for any two elements $f=\sum\limits_{\Lambda \in \mathcal{P}^{G_\star}} f_\Lambda P_\Lambda$ and $g=\sum\limits_{\Lambda \in \mathcal{P}^{G_\star}} g_\Lambda P_\Lambda$ of $\mathcal{A}^G.$ In particular, we have:
$$<P_\Lambda,P_\Gamma>=\delta_{\Lambda,\Gamma}Z_\Lambda,$$
where $\delta_{\Lambda,\Gamma}$ is the Kronecker symbol. 

For each irreducible character $\gamma\in G^\star$ and each $r\geq 1,$ let 
$$P_r(\gamma):=\sum_{c\in G_\star}\xi_c^{-1}\gamma(c) P_r(c),$$
where $\gamma(c)$ is the value of the character $\gamma$ on an element of the conjugacy class $c.$ By the orthogonality of the characters of $G,$
$$<\gamma,\rho>:=\frac{1}{|G|}\sum_{g\in G}\gamma(g)\rho(g)=\sum_{c\in G_\star}\xi_{c}^{-1}\gamma(c)\rho(c)=\delta_{\gamma,\delta},$$
we can write
$$P_r(c)=\sum_{\gamma\in G^\star}\overline{\gamma(c)}P_r(\gamma).$$
We may think of $P_r(\gamma)$ as the $r^{th}$ power sum in a new sequence of variables $y_\gamma=(y_{i\gamma})_{i\geq 1}$ and denote by $s_\rho(\gamma)$ the schur function $s_\rho$ associated to the partition $\rho$ on the sequence of variables $(y_{i\gamma})_{i\geq 1}.$ Now, for any family of partitions $\Lambda\in \mathcal{P}^{G^\star},$ define $$S_\Lambda:=\prod_{\gamma\in G^\star}s_{\Lambda(\gamma)}(\gamma).$$
The family $(S_\Lambda)_{\Lambda\in \mathcal{P}^{G^\star}}$ is an orthonormal basis of $\mathrm{A}^G,$ see \cite[(7.4)]{McDo}. 

Let $V^\gamma$ be the irreducible $G$-module associated to $\gamma\in G^\star.$ The group $G\wr \mathcal{S}_n$ acts on the $n^{th}$ tensor power $T^n(V^\gamma)=V^\gamma\otimes V^\gamma\cdots \otimes V^\gamma$ as follows:
$$(g;p)\cdot (v_1\otimes v_2\otimes\cdots \otimes v_n):=g_1v_{p^{-1}(1)}\otimes \cdots \otimes g_nv_{p^{-1}(n)},$$
where $(g;p)\in G\wr \mathcal{S}_n$ and $v_1,v_2,\cdots ,v_n\in V^\gamma.$ Let us denote by $\eta_n(\gamma)$ the character of this representation of $G\wr \mathcal{S}_n.$ From \cite[(8.2)]{McDo}, if $x\in G\wr \mathcal{S}_n$ has type $\Lambda\in \mathcal{P}^{G_\star}_n,$ then:
$$\eta_n(\gamma)(x)=\prod_{c\in G_\star}{\gamma(c)}^{l(\Lambda(c))}.$$

For any partition $\mu$ of $m$ and each $\gamma\in G^\star,$ we define
$$X^\mu(\gamma):=\det (\eta_{\mu_i-i+j}({\gamma})).$$ 
We can extend this definition to families of partitions $\Lambda\in \mathcal{P}^{G^\star}_n.$ If $\Lambda\in \mathcal{P}^{G^\star}_n,$ let 
$$X^\Lambda:=\prod_{\gamma\in G^\star}X^{\Lambda(\gamma)}(\gamma).$$ 
The family $(X^\Lambda)_{\Lambda\in \mathcal{P}^{G^\star}_n}$ is a full list of irreducible characters of $G\wr \mathcal{S}_n.$ For any two families of partitions $\Lambda\in \mathcal{P}^{G^\star}_n$ and $\Delta\in \mathcal{P}^{G_\star}_n,$ let us denote by $X^\Lambda_\Delta$ the value of the character $X^\Lambda$ on any of the elements of the conjugacy class $C_\Delta.$ By \cite[page 177]{McDo}, we have the following three important identities

$$X^\Lambda_\Delta=<S_\Lambda,P_\Delta>,$$ 
$$S_\Lambda=\sum_{\Gamma\in \mathcal{P}^{G_\star}_n}Z_{\Gamma}^{-1}X^\Lambda_\Gamma P_\Gamma$$ 
and
$$P_\Delta=\sum_{\Sigma\in \mathcal{P}^{G^\star}_n}\overline{X^\Sigma_\Lambda} S_\Sigma.$$

Let us consider the algebra $\mathrm{A}^{G\#}$ isomorphic to $\mathrm{A}^{G}$ defined using the shifted symmetric functions.  It has a basis formed by the shifted functions $P^{\#}_\Delta$ defined by 
$$P^{\#}_\Delta:=\prod_{c\in G_\star}P^{\#}_{\Delta(c)}(c),$$
for any $\Delta\in \mathcal{P}^{G_\star}.$
For any family of partitions $\Lambda\in \mathcal{P}^{G_\star},$ we set 
$$P^{\#}_\Delta(\Lambda):=\prod_{c\in G_\star}P^{\#}_{\Delta(c)}(c)(\Lambda(c)).$$

\begin{theoreme}\label{th:7.1}
The linear map $F^G:\mathcal{A}_\infty^G\longrightarrow \mathrm{A}^{G\#}$ defined by $$F^G(\mathbf{C}_{\Delta;\infty})=\frac{|G|^{|\Delta|}}{Z_\Delta}P^{\#}_\Delta$$ is an isomorphism of algebras.
\end{theoreme}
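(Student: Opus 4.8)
The plan is to follow the strategy used for $G=\lbrace 1_G\rbrace$ and recalled above, detecting multiplicativity through the normalized irreducible characters of $G\wr \mathcal{S}_n.$ Observe first that $F^G$ is automatically a linear bijection: it sends the basis $(\mathbf{C}_{\Delta;\infty})_{\Delta\in \mathcal{P}^{G_\star}}$ of $\mathcal{A}_\infty^G$ to the nonzero scalar multiples $\frac{|G|^{|\Delta|}}{Z_\Delta}P^{\#}_\Delta$ of the basis $(P^{\#}_\Delta)_{\Delta\in \mathcal{P}^{G_\star}}$ of $\mathrm{A}^{G\#}.$ Hence the only real content is that $F^G$ respects products, and this is where the representation theory of the wreath product enters.

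For each integer $n$ and each family $\Lambda\in \mathcal{P}^{G^\star}_n$ indexing an irreducible character $X^\Lambda$ of $G\wr \mathcal{S}_n,$ the normalized character $x\mapsto X^\Lambda(x)/\dim X^\Lambda$ is an algebra homomorphism from $Z(\mathbb{C}[G\wr \mathcal{S}_n])$ to $\mathbb{C}$ (it is the central character attached to $X^\Lambda$). Since $\Proj_n$ and $\psi$ are homomorphisms of algebras, the composite
$$\Phi_\Lambda:=\frac{X^\Lambda}{\dim X^\Lambda}\circ \psi\circ \Proj_n:\mathcal{A}_\infty^G\longrightarrow \mathbb{C}$$
is again an algebra homomorphism. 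As $\Lambda$ runs over $\bigcup_n \mathcal{P}^{G^\star}_n,$ these functionals separate the basis $(\mathbf{C}_{\Delta;\infty})_\Delta$: for fixed $n,$ the values $X^\Lambda_{\underline{\Delta}_n}/\dim X^\Lambda$ with $\Lambda\in \mathcal{P}^{G^\star}_n$ and $\Delta$ proper form an invertible matrix, because the irreducible characters of $G\wr \mathcal{S}_n$ are a basis of the space of class functions.

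The heart of the argument is the explicit computation of $\Phi_\Lambda(\mathbf{C}_{\Delta;\infty}),$ generalizing the displayed calculation in the case $G=\lbrace 1_G\rbrace.$ Writing $n=|\Lambda|,$ one has $\Proj_n(\mathbf{C}_{\Delta;\infty})=\mathbf{C}_{\Delta;n}$ whenever $|\Delta|\leq n,$ and then Equation \eqref{imagepsi} gives $\psi(\mathbf{C}_{\Delta;n})=\binom{n-|\Delta|+m_1(\Delta(\lbrace 1_G\rbrace))}{m_1(\Delta(\lbrace 1_G\rbrace))}\mathbf{C}_{\underline{\Delta}_n}.$ Applying the normalized character turns $\mathbf{C}_{\underline{\Delta}_n}$ into $\frac{|C_{\underline{\Delta}_n}|}{\dim X^\Lambda}X^\Lambda_{\underline{\Delta}_n}.$ Substituting the formulas for $|C_{\underline{\Delta}_n}|$ and $Z_{\underline{\Delta}_n}$ from Section \ref{sec_2} and simplifying, the $n$-dependent factors collapse into a falling factorial and a power of $|G|,$ yielding
$$\Phi_\Lambda(\mathbf{C}_{\Delta;\infty})=\frac{|G|^{|\Delta|}}{Z_\Delta}\,\frac{(n\downharpoonright |\Delta|)}{\dim X^\Lambda}\,X^\Lambda_{\underline{\Delta}_n}.$$
It then remains to identify the right-hand side with $\frac{|G|^{|\Delta|}}{Z_\Delta}P^{\#}_\Delta(\Lambda),$ i.e. to prove that $\frac{(n\downharpoonright |\Delta|)}{\dim X^\Lambda}X^\Lambda_{\underline{\Delta}_n}$ is the value at $\Lambda$ of the shifted power sum $P^{\#}_\Delta.$ This is the wreath-product analogue of $p^{\#}_\delta(\lambda)=\frac{(|\lambda|\downharpoonright |\delta|)}{\dim \lambda}\chi^{\lambda}_{\underline{\delta}_{|\lambda|}},$ and I expect it to be the main obstacle. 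I would establish it by passing through the identities $X^\Lambda_\Delta=\langle S_\Lambda,P_\Delta\rangle$ and the expansions of $S_\Lambda$ and $P_\Delta$ recorded above, using the change of variables $P_r(c)=\sum_{\gamma}\overline{\gamma(c)}P_r(\gamma)$ to move between the alphabets indexed by $G_\star$ and by $G^\star,$ together with the branching rule for wreath products and the vanishing characterization of shifted Schur functions applied in each of the $|G^\star|$ alphabets.

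Finally, I would conclude by transfer of multiplicativity. By the key formula, $F^G(\mathbf{C}_{\Delta;\infty})$ evaluated at $\Lambda$ equals $\Phi_\Lambda(\mathbf{C}_{\Delta;\infty}),$ so by linearity $F^G(b)$ evaluated at $\Lambda$ equals $\Phi_\Lambda(b)$ for every $b\in \mathcal{A}_\infty^G.$ Given two basis elements, $F^G(\mathbf{C}_{\Lambda';\infty}\mathbf{C}_{\Delta;\infty})$ and $F^G(\mathbf{C}_{\Lambda';\infty})F^G(\mathbf{C}_{\Delta;\infty})$ take at every $\Lambda$ the common value $\Phi_\Lambda(\mathbf{C}_{\Lambda';\infty})\Phi_\Lambda(\mathbf{C}_{\Delta;\infty}),$ since each $\Phi_\Lambda$ is an algebra homomorphism. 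Because a shifted symmetric function is determined by its values, the two sides coincide, so $F^G$ is multiplicative. Combined with the bijectivity noted at the outset, this shows that $F^G$ is an isomorphism of algebras.
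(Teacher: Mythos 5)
Your proposal follows essentially the same route as the paper: the paper's proof likewise considers the composition $F_\Lambda^G:=\frac{X^\Lambda}{\dim \Lambda}\circ \psi \circ \Proj_{|\Lambda|}$, carries out the same chain of simplifications to arrive at $\frac{|G|^{|\Delta|}}{Z_\Delta}\frac{(|\Lambda|\downharpoonright |\Delta|)}{\dim \Lambda}X^{\Lambda}_{\underline{\Delta}_{|\Lambda|}}=\frac{|G|^{|\Delta|}}{Z_\Delta}P^{\#}_\Delta(\Lambda)$, and concludes that this equals $F^G(\mathbf{C}_{\Delta;\infty})(\Lambda)$. The identification you flag as the main obstacle is asserted in the paper's chain of equalities without further detail, and your explicit treatment of bijectivity and of the transfer of multiplicativity via evaluation at families of partitions is consistent with, and if anything more complete than, what the paper records.
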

\begin{proof}
Let $\Lambda\in \mathcal{P}^{G^\star}_n$ and consider the composition $F_\Lambda^G:=\frac{X^\Lambda}{\dim \Lambda}\circ \psi \circ \Proj_{|\Lambda|}$ of morphisms. Let us see how $F_\Lambda^G$ acts on the basis elements of $\mathcal{A}_\infty^G.$ If $\Delta\in \mathcal{P}^{G_\star}$ with $|\Delta|>|\Lambda|,$ it would be clear then that $F_\Lambda^G({\bf C}_{\Delta;\infty})=0.$ Suppose now that $|\Lambda|\geq |\Delta|,$ we have the following equalities:
\begin{eqnarray}\label{eq_isom}
\left(\frac{X^\Lambda}{\dim \Lambda}\circ \psi \circ \Proj_{|\Lambda|} \right)(\mathbf{C}_{\Delta;\infty})&=& \frac{X^\Lambda}{\dim \Lambda} \left( {|\Lambda|-|\Delta|+m_1(\Delta(\lbrace 1_G\rbrace))\choose m_1(\Delta(\lbrace 1_G\rbrace))} \mathbf{C}_{\underline{\Delta}_{|\Lambda|}} \right) \\
\notag
&=& {|\Lambda|-|\Delta|+m_1(\Delta(\lbrace 1_G\rbrace))\choose m_1(\Delta(\lbrace 1_G\rbrace))}\frac{n!|G|^n}{ Z_{\underline{\Delta}_{|\Lambda|}}\dim \Lambda}X^{\Lambda}_{\underline{\Delta}_{|\Lambda|}} \\
\notag
&=& \frac{(|G|)^{|\Delta|}}{Z_\Delta}\frac{(|\Lambda|\downharpoonright |\Delta|)}{\dim \Lambda} X^{\Lambda}_{\underline{\Delta}_{|\Lambda|}}\\
\notag
&=& \frac{(|G|)^{|\Delta|}}{Z_\Delta}P^{\#}_\Delta(\Lambda)\\
\notag
&=&F^G(\mathbf{C}_{\Delta;\infty})(\Lambda)
\end{eqnarray}
\end{proof}

\bibliographystyle{abbrv}
\bibliography{biblio}
\label{sec:biblio}

\end{document}